\documentclass[12pt]{article}
\usepackage[utf8]{inputenc}
\usepackage{geometry}

\usepackage[
backend=biber,
doi=false,
url=false,
isbn=false, 
maxbibnames=99,
eprint=false
]{biblatex}
\AtEveryBibitem{\ifentrytype{book}{\clearfield{pages}}{}}

\addbibresource{references.bib}
\usepackage{graphicx}
\graphicspath{ {./images/} }
\usepackage{amsmath} 
\usepackage{amssymb} 
\usepackage{listings}
\usepackage[dvipsnames]{xcolor}
\usepackage{color}
\usepackage[vcentermath]{youngtab}
\usepackage{amsthm}
\usepackage{tikz}
\usepackage{lscape}
\setlength{\parindent}{1em}
\setlength{\parskip}{1em}
\usepackage{amsthm}
\usepackage{hyperref}
\usepackage{authblk}
\usepackage[linesnumbered,lined,commentsnumbered,ruled]{algorithm2e}

\usepackage{my_preamble}

\setlength{\parindent}{0em}
\setlength{\parskip}{1em}

\hypersetup{
    colorlinks=true,
    linkcolor=magenta,
    filecolor=magenta,      
    urlcolor=magenta,
    citecolor=magenta
}

\theoremstyle{definition}
\newtheorem{thm}{Theorem}
\newtheorem{theorem}[thm]{Theorem}

\newtheorem{definition}[thm]{Definition}

\newtheorem{lemma}[thm]{Lemma}
\newtheorem{cor}[thm]{Corollary}
\newtheorem{prop}[thm]{Proposition}

\newtheorem{example}[thm]{Example}
\newtheorem{remark}[thm]{Remark}

\title{Logarithmic Voronoi cells}
\author[1]{Yulia Alexandr}
\author[2,3]{Alexander Heaton}
\affil[1]{University of California, Berkeley}
\affil[2]{Max Planck Institute for Mathematics in the Sciences, Leipzig}
\affil[3]{Technische Universit\"at Berlin}
\date{}



\geometry{verbose,tmargin=1in,bmargin=1in,lmargin=1in,rmargin=1in}

\begin{document}

\maketitle

\begin{abstract}
\noindent We study Voronoi cells in the statistical setting  by considering preimages of the maximum likelihood estimator that tessellate an open probability simplex. In general, logarithmic Voronoi cells are convex sets. However, for certain algebraic models, namely finite models, models with ML degree 1, linear models, and log-linear (or toric) models, we show that logarithmic Voronoi cells are polytopes. As a corollary, the algebraic moment map has polytopes for both its fibres and its image, when restricted to the simplex.  We also compute non-polytopal logarithmic Voronoi cells using numerical algebraic geometry. Finally, we determine logarithmic Voronoi polytopes for the finite model consisting of all empirical distributions of a fixed sample size. These polytopes are dual to the logarithmic root polytopes of Lie type A, and we characterize their~faces.

\end{abstract}

\section{Introduction}\label{section:introduction}

For any subset $X \subset \mathbb{R}^n$, the \textit{Voronoi cell} of a point $p \in X$ consists of all points of $\mathbb{R}^n$ which are closer to $p$ than to any other point of $X$ in the Euclidean metric. In this article we discuss the analogous \textit{logarithmic Voronoi cells} which find application in statistics. A discrete statistical model is a subset of the probability simplex $\mathcal{M} \subset \Delta_{n-1}$, since probabilities are positive and sum to $1$. The maximum likelihood estimator $\Phi$ (MLE) sends an empirical distribution $u \in \Delta_{n-1}$ of observed data to the point in the model which best explains the data. This means $p = \Phi(u)$ maximizes the log-likelihood function $\ell_u(p) := \sum_{i=1}^n u_i \log(p_i)$ restricted to $\mathcal{M}$. Note that $\ell_u$ is strictly concave on $\Delta_{n-1}$ and takes its maximum value at $u$. Usually, $u \notin \mathcal{M}$, and we must find the point $\Phi(u) \in \mathcal{M}$ which is closest in the log-likelihood sense. For $p \in \mathcal{M}$ we define the \textit{logarithmic Voronoi cell}
\begin{equation*}
    \log \text{Vor}_\mathcal{M}(p) = \left\{ u \in \Delta_{n-1} : \Phi(u) = p \right\}.
\end{equation*}
Information Geometry \cite{AyJostLeSchwachhofer2017InformationGeometryTEXT} considers MLE in the context of the Kullbach-Leibler divergence of probability distributions, sending data to the nearest point with respect to a Riemannian metric on $\Delta_{n-1}$. Algebraic Statistics \cite{DrtonSturmfelsSullivant2009LecturesOnAlgebraicStatistics} considers the case where $\mathcal{M}$ can be described as either the image or kernel of algebraic maps. Recent work in Metric Algebraic Geometry  \cite{VoronoiCifuentesRanestadSturmfelsWeinstein2018, DiRoccoEklundWeinstein2020BottleneckDegreeOfAlgebraicVarieties, SturmfelsEuclideanDistanceDegree2016, HorobetWeinstein2019OffsetHypersurfacesAndPersistentHomologyAlgebraicVarieties} concerns the properties of real algebraic varieties that depend on a distance metric. Logarithmic Voronoi cells are natural objects of interest in all three subjects.

As an example, consider flipping a biased coin three times. There are four possible outcomes, 3 heads (hhh), 2 heads (hht,hth,thh), 1 head (htt,tht,tth), and 0 heads (ttt).  Parametrically, the \textit{twisted cubic} is given by
\begin{equation*}
    t \mapsto p(t) = \big(t^3, 3t^2(1-t), 3t(1-t)^2, (1-t)^3 \big) \in \mathcal{M}.
\end{equation*}
For this model's many lives, see \cite{Little2019ManyLivesTwistedCubic}.  We compute logarithmic Voronoi cells $\log \text{Vor}_\mathcal{M}(p(t))$ with parameter values
\begin{equation*}
    t \in \left\{ \frac{1}{25}, \frac{2}{25}, \dots, \frac{24}{25} \right\}
\end{equation*}
which live inside the simplex $\Delta_{3} \subset \mathbb{R}^4$, and whose orthogonal projections into $3$-space are shown in Figure \ref{figure:twisted-cubic-lognormal-polytopes}. In this case, the logarithmic Voronoi cells are polytopes, and we get both triangles and quadrilaterals, depending on the point $p(t) \in \mathcal{M}$. The fact that these polytopes are equal to the logarithmic Voronoi cells will follow from Theorem \ref{theorem:log-linear-models-imply-voronoi-equals-polytopes} below.
\begin{figure}[!htb]
    \centering
    \includegraphics[width=0.32\textwidth]{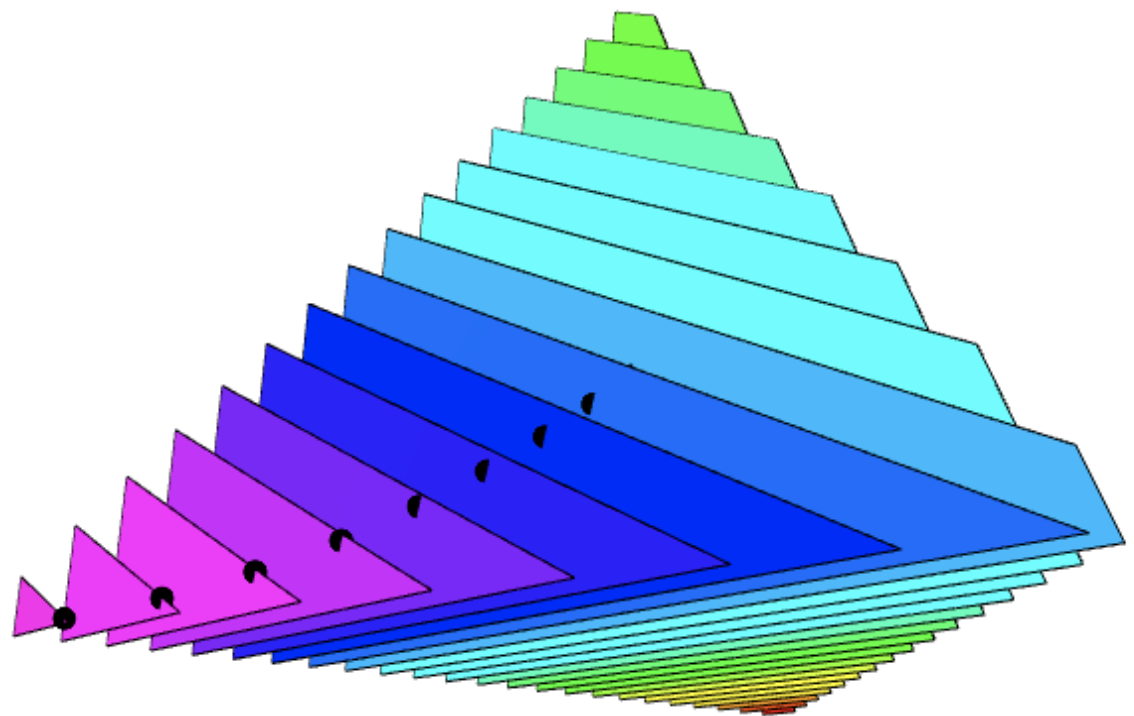} \includegraphics[width=0.32\textwidth]{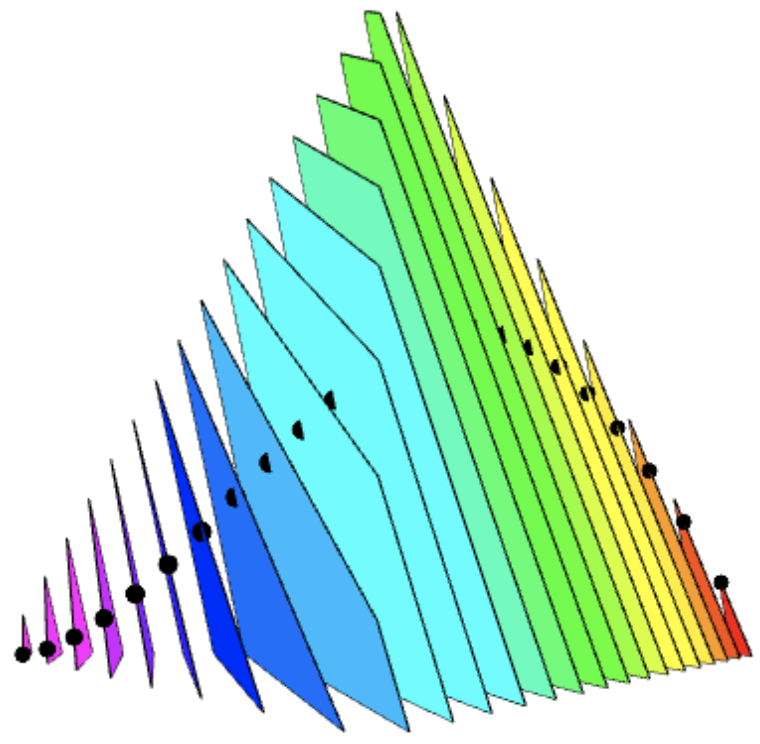} \includegraphics[width=0.32\textwidth]{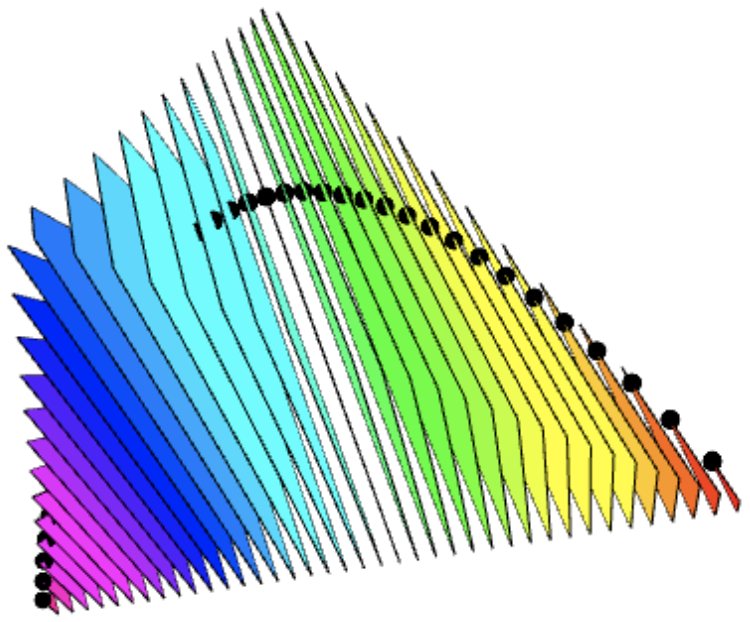}
    \caption{Logarithmic Voronoi cells}
    \label{figure:twisted-cubic-lognormal-polytopes}
\end{figure}

After giving the basic definitions in Section \ref{section:preliminaries}, Section \ref{section:when-are-log-Voronoi-equal-polytopes} describes the relationship between logarithmic Voronoi cells and logarithmic polytopes in the context of algebraic statistics. In particular, we show that ML degree $1$ implies that the logarithmic Voronoi cells are polytopes, and give counterexamples to the converse statement. We also consider both linear models and log-linear (toric) models, showing that both families of statistical models have the property that logarithmic Voronoi cells are polytopes.  These include the twisted cubic of Figure \ref{figure:twisted-cubic-lognormal-polytopes}, decomposable graphical models \cite{Lauritzen1996GraphicalModels}, Bayesian networks \cite{GarciaStillmanSturmfels2005AlgebraicGeometryOfBayesianNetworks}, staged tree models \cite{CollazoGorgenSmith2018ChainEventGraphs, SmithAnderson2008ConditionalIndependenceChainEventGraphs}, multinomial distributions, phylogenetic models, hidden Markov models, and many others arising in applications \cite{PachterSturmfels2005AlgebraicStatisticsForComputationalBiology}. Corollary \ref{corollary:moment-map} states that both the image and fibres of the algebraic moment map are polytopes. In Section \ref{section:numerical-algebraic-geometry} we show how to compute a (not necessarily polytopal) logarithmic Voronoi cell using numerical algebraic geometry. By calculating $\Phi(u)$ for $60,000$ points $u$ with respect to a model of ML degree 39, we demonstrate that logarithmic Voronoi cells can be reliably computed using numerical methods. Finally, in Section \ref{section:finite-point-models} we discuss the historical motivation of Georgy Voronoi and adapt it to the statistical setting by analyzing a model with finitely many points, namely all possible empirical distributions on $n$ states with $d$ trials. We call the polytopes that arise \textit{logarithmic root polytopes} of type $A_{n-1}$, show they are dual to the logarithmic Voronoi cells in Theorem \ref{theorem:log-root-are-dual-log-Voronoi}, and characterize their faces in~Theorem~\ref{theorem:log-root-polytope-face-bijection}.

\section{Preliminaries}\label{section:preliminaries}

We work with the open probability simplex $\Delta_{n-1} \subset \mathbb{R}^n$ defined by
\begin{equation*}
    \Delta_{n-1} := \left\{ u \in \mathbb{R}^n : \sum_{i=1}^n u_i = 1, u_i > 0 \text{ for all } i \in [n] \right\}.
\end{equation*}
A statistical model $\mathcal{M}$ is a subset of the probability simplex. When $\mathcal{M}$ is defined as the intersection of $\Delta_{n-1}$ with an algebraic variety or the image of rational map, we say that $\mathcal{M}$ is an algebraic statistical model \cite{PachterSturmfels2005AlgebraicStatisticsForComputationalBiology, Sullivant2018AlgebraicStatistics}. For any point $u \in \Delta_{n-1}$, the log-likelihood function $\ell_u: \mathbb{R}^n_{>0} \to \mathbb{R}$ is defined by $\ell_u(p) = \sum_{i=1}^n u_i \log(p_i)$. For any model $\mathcal{M} \subset \Delta_{n-1}$, we define the relation $\Phi \subset \Delta_{n-1} \times \mathcal{M}$ by
\begin{equation*}
    (u,p) \in \Phi \iff p \in \text{argmax}_{q \in \mathcal{M}} \left\{ \ell_u(q) : q \in \mathcal{M} \right\}.
\end{equation*}
If $(u,p) \in \Phi$ then we also write $\Phi(u) = p$. We write $\Delta_{n-1}^\mathcal{M}$ for the set of $u \in \Delta_{n-1}$ such that $\Phi(u)$ exists.  Describing the set $\Delta_{n-1}^\mathcal{M}$ and how it extends to the boundary of $\Delta_{n-1}$ is an active area of research, especially with respect to zeros in the data \cite{Fienberg1970Quasi-independenceMaximumLikelihoodEstimationIncompleteContingencyTables, GrossRodriguez2014MaximumLikelihoodPresenceDataZeros}. MLE existence is also connected to polystable and stable orbits in invariant theory \cite{AmendolaKohnReichenbachSeigal2020InvariantTheoryScalingAlgorithmsForMaximumLikelihoodEstimation}. For the important family of log-linear (toric) models, \cite{ErikssonFienbergRinaldoSullivant2006PolyhedralConditionsForNonexistenceOfMLE} shows that positive data $u \in \Delta_{n-1}$ guarantees existence, and in general the MLE exists exactly when the observed margins belong to the relative interior of a certain polytope. See also \cite[Theorem 8.2.1]{Sullivant2018AlgebraicStatistics}. 

Finally, we note that for models with more complicated geometry, $\Phi(u)$ cannot always be computed by finding critical points of $\ell_u$ restricted to manifold points of $\mathcal{M}$. The present article takes the first step of computing logarithmic Voronoi cells for models where critical points of $\ell_u$ succeed in finding the MLE, as well as some interesting finite models. We state necessary assumptions where required. More complicated examples outside the scope of the present article include models of nonnegative rank $r$ matrices, which were studied in \cite{KubjasRobevaSturmfels2015FixedPointsEMAlgorithmNonnegativeRankBoundaries}.

Whenever $p \in \mathcal{M} \subset \mathbb{R}^n$ admits a tangent space at the point $p$, we denote by $N_p \mathcal{M}$ its orthogonal complement with respect to the Euclidean inner product on $\mathbb{R}^n$. We are also interested in the \textit{log-normal space} at the point $p \in \mathcal{M}$, defined by
\begin{equation*}
    \log N_p \mathcal{M} := \left\{ u \in \mathbb{R}^n : \nabla \ell_u(p) \in N_p \mathcal{M} \right\}.
\end{equation*}
Here, $\nabla \ell_u(p)$ is the vector whose entries are given by the partial derivatives of $\ell_u$ with respect to each of the variables $p_1,\dots,p_n$. For an algebraic statistical model $\mathcal{M}$, the \textit{ML degree} is the number of complex critical points of $\ell_u$ on $\mathcal{M}$ for generic data $u \in \Delta_{n-1}$ \cite[p. 140]{Sullivant2018AlgebraicStatistics}.

\begin{lemma}\label{lemma:log-normal-space-linear-subspace}
The log-normal space $\log N_p \mathcal{M}$ is a linear subspace of $\mathbb{R}^n$.
\end{lemma}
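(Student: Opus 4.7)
The plan is to exploit the explicit formula for $\nabla \ell_u(p)$ and observe that, for fixed $p$, it depends linearly on $u$. Since $\log N_p \mathcal{M}$ is defined as the preimage of the linear subspace $N_p \mathcal{M}$ under this map, the result will follow immediately.

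More concretely, first I would unpack the gradient: since $\ell_u(p) = \sum_{i=1}^n u_i \log(p_i)$, differentiating with respect to $p_i$ gives
\begin{equation*}
    \nabla \ell_u(p) = \left( \frac{u_1}{p_1}, \frac{u_2}{p_2}, \dots, \frac{u_n}{p_n} \right).
\end{equation*}
Then I would define the linear map $L_p \colon \mathbb{R}^n \to \mathbb{R}^n$ by $L_p(u) := \mathrm{diag}(1/p_1, \dots, 1/p_n) \cdot u$, noting that this is well-defined because $p \in \mathcal{M} \subset \Delta_{n-1}$ has strictly positive coordinates, so each $1/p_i$ is finite. The identity $\nabla \ell_u(p) = L_p(u)$ is then immediate.

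Next I would observe that $N_p \mathcal{M}$ is a linear subspace of $\mathbb{R}^n$ by definition (it is the orthogonal complement of the tangent space $T_p \mathcal{M}$ with respect to the Euclidean inner product). The set $\log N_p \mathcal{M}$ can therefore be rewritten as
\begin{equation*}
    \log N_p \mathcal{M} = \{u \in \mathbb{R}^n : L_p(u) \in N_p \mathcal{M}\} = L_p^{-1}(N_p \mathcal{M}),
\end{equation*}
which is the preimage of a linear subspace under a linear map, hence a linear subspace of $\mathbb{R}^n$.

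There is no genuine obstacle here; the only thing that might trip one up is remembering that the definition of $\log N_p \mathcal{M}$ requires $p$ to admit a tangent space (otherwise $N_p\mathcal{M}$ is not defined), and using positivity of the coordinates of $p$ to make sense of $L_p$. Both are built into the ambient hypotheses, so the argument reduces to the one-line observation that a preimage of a linear subspace under a linear map is linear.
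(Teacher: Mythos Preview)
Your proof is correct and rests on the same observation as the paper's: that $\nabla \ell_u(p) = (u_1/p_1,\dots,u_n/p_n)$ is linear in $u$, so the condition $\nabla \ell_u(p)\in N_p\mathcal{M}$ cuts out a linear subspace. The paper phrases this by augmenting a matrix whose rows span $N_p\mathcal{M}$ with the row $(u_i/p_i)$ and noting that the maximal minors are linear in the $u_i$, while you phrase it as the preimage $L_p^{-1}(N_p\mathcal{M})$ of a subspace under a linear map; these are the same argument in slightly different packaging.
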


\begin{proof}
The normal space $N_p \mathcal{M}$ is a linear subspace. Arrange a basis as the rows of a matrix. Adjoin another row with entries $u_i/p_i$, the partial derivatives of $\ell_u(p)$ with respect to each $p_i$. The maximal minors of the resulting matrix are linear equations in the variables $u_i$ and therefore cut out a linear space of such $u \in \mathbb{R}^n$. This space is the log-normal space at $p$.
\end{proof}

By Lemma \ref{lemma:log-normal-space-linear-subspace}, the intersection of the log-normal space at a point $p\in\M$ with the closed probability simplex $\overline{\Delta_{n-1}}$ is a polytope $\log \text{Poly}_\mathcal{M}(p)$, which we call its \textit{log-normal polytope}. In what follows, when we say that a logarithmic Voronoi cell equals its log-normal polytope, we mean that they are equal as sets, excepting the points in the boundary of the simplex.

\section{Logarithmic Voronoi cells and polytopes}\label{section:when-are-log-Voronoi-equal-polytopes}

\begin{prop}\label{prop:finite-models-logVor-are-polytopes}
Let $\mathcal{M}$ be any finite statistical model. Then the logarithmic Voronoi cells $\log \text{Vor}_\mathcal{M}(p)$ are polytopes for each $p \in \mathcal{M}$.
\end{prop}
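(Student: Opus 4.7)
The plan is to observe that for a finite model, the condition $\Phi(u)=p$ is cut out by finitely many linear inequalities in $u$, so the cell is automatically polytopal. Write $\mathcal{M}=\{p^{(1)},\dots,p^{(k)}\}$.

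First I would translate the MLE condition into inequalities. By definition, $u\in\log\text{Vor}_\mathcal{M}(p^{(j)})$ iff $p^{(j)}$ maximizes $\ell_u$ over $\mathcal{M}$, i.e.\ iff
\begin{equation*}
\sum_{i=1}^n u_i \log(p^{(j)}_i) \;\geq\; \sum_{i=1}^n u_i \log(p^{(l)}_i)
\qquad \text{for every } l\in\{1,\dots,k\}.
\end{equation*}
Rearranging gives, for each $l\neq j$, the inequality
\begin{equation*}
\sum_{i=1}^n u_i \bigl(\log(p^{(j)}_i)-\log(p^{(l)}_i)\bigr) \;\geq\; 0,
\end{equation*}
whose coefficients $\log(p^{(j)}_i)-\log(p^{(l)}_i)$ are fixed real numbers (the points of $\mathcal{M}$ lie in the open simplex, so the logarithms are defined). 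This is a linear inequality in the coordinates $u_1,\dots,u_n$.

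Next I would intersect with the simplex. The set of $u$ satisfying the $k-1$ linear inequalities above, together with the affine constraint $\sum_i u_i = 1$ and the halfspaces $u_i \geq 0$, is an intersection of finitely many closed halfspaces inside an affine hyperplane of $\mathbb{R}^n$. It is bounded because it is contained in $\overline{\Delta_{n-1}}$, hence it is a polytope $P_j$. By the convention stated just before the proposition (equality of logarithmic Voronoi cells with polytopes is understood up to the boundary of the simplex), we have $\log\text{Vor}_\mathcal{M}(p^{(j)}) = P_j \cap \Delta_{n-1}$, and so $\log\text{Vor}_\mathcal{M}(p^{(j)})$ is a polytope.

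There is essentially no obstacle here; the only minor subtlety is that $\Phi$ is set up as a relation rather than a function, so on the shared facets where two linear forms $\ell_u(p^{(j)})$ and $\ell_u(p^{(l)})$ are equal, the argmax is not unique and the same $u$ lies in several cells. This does not affect polytopality of each individual cell, only the fact that adjacent cells share facets, exactly as in the classical Voronoi picture.
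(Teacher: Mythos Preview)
Your proof is correct and follows essentially the same approach as the paper: both arguments observe that the condition $\ell_u(p)\geq \ell_u(q)$ is linear in $u$, so for finitely many $q$ the logarithmic Voronoi cell is a finite intersection of closed halfspaces with the simplex, hence a polytope. Your added remarks on boundedness and on the non-uniqueness of $\Phi$ along shared facets are accurate but not needed for the statement itself.
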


\begin{proof}
Fix $p \in \mathcal{M}$. The set of all points $u \in \Delta_{n-1}$ such that $\ell_u(p) \geq \ell_u(q)$ for all $q \in \mathcal{M}$ is the logarithmic Voronoi cell of $p$. Consider some $q \neq p$ but $q \in \mathcal{M}$. Then $\ell_u(p) \geq \ell_u(q)$ becomes the condition that
\begin{equation*}
    \sum_{i=1}^n u_i \log\left( \frac{p_i}{q_i} \right) \geq 0.
\end{equation*}
But this is linear in $u$ and so defines a closed halfspace. Since there are finitely many points in $\mathcal{M}$, we see that the logarithmic Voronoi cell is an intersection of finitely many closed halfspaces (including those defining $\Delta_{n-1}$). Therefore it is a polytope.
\end{proof}

For infinite models, the logarithmic Voronoi cells are, in general, not polytopes. However, if the model is smooth at $p$, the logarithmic Voronoi cell will be contained in the log-normal polytope. Figure \ref{fig:large-example} shows a logarithmic Voronoi cell for $p \in \mathcal{M} \subset \Delta_5 \subset \mathbb{R}^6$ which is not a polytope, but is contained in a polytope. In this case it is the hexagon given by $\log \text{Poly}(p) = \log N_p \mathcal{M} \cap \overline{\Delta_5}$. Since the log-normal space is $2$-dimensional, by choosing an orthonormal basis agreeing with this subspace we can visualize the logarithmic Voronoi cell, despite it living in $\mathbb{R}^6$. We discuss this example in detail in Section \ref{section:numerical-algebraic-geometry}. For more on finite models, see Section \ref{section:finite-point-models}.

\begin{figure}[!htb]
    \centering
    \includegraphics[width=0.6\textwidth]{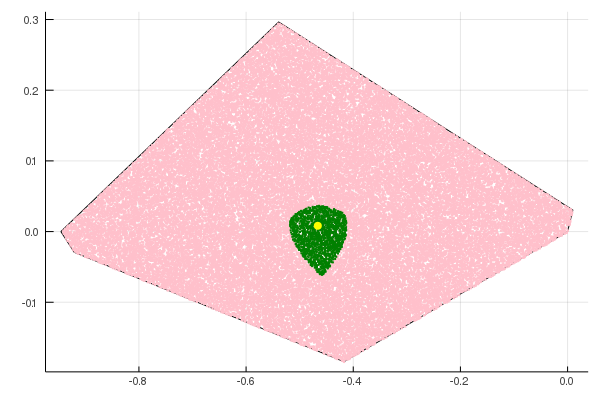}
    \caption{Logarithmic Voronoi cell computed using numerical algebraic geometry}
    \label{fig:large-example}
\end{figure}

\begin{lemma}\label{lemma:smooth-implies-gradient-in-normal-space}
Let $\Phi(u) = p$ for some $p \in \mathcal{M} \subset \Delta_{n-1}$ such that $U \cap \mathcal{M}$ is a manifold for some $p$-neighborhood $U$ in $\mathbb{R}^n$. Then $u$ lies in the logarithmic normal space $\log N_p \mathcal{M}$ and
\begin{equation*}
    \log \text{Vor}_\mathcal{M}(p) \subset \log \text{Poly}_\mathcal{M}(p).
\end{equation*}
\end{lemma}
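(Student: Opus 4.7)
The plan is to use the first-order optimality condition (Lagrange multipliers) for a smooth constrained maximization problem. Since $p = \Phi(u)$, by definition $p$ is a global maximizer of $\ell_u$ restricted to $\mathcal{M}$, and hence also a local maximizer. Because $\mathcal{M}$ coincides with a smooth manifold inside some neighborhood $U$ of $p$ in $\mathbb{R}^n$, and because $\ell_u$ is $C^1$ on the positive orthant $\mathbb{R}^n_{>0}$ where it is defined, the standard necessary condition for a local maximum on a manifold applies: the ambient gradient $\nabla \ell_u(p)$ must annihilate every tangent vector $v \in T_p \mathcal{M}$.

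First I would spell this out explicitly. Pick any smooth curve $\gamma : (-\varepsilon, \varepsilon) \to U \cap \mathcal{M}$ with $\gamma(0) = p$ and $\gamma'(0) = v \in T_p \mathcal{M}$; since $\mathcal{M} \subset \Delta_{n-1}$ the curve stays in the open simplex for small $t$, so $\ell_u \circ \gamma$ is differentiable and achieves a local maximum at $t = 0$. Hence
\begin{equation*}
    0 \;=\; \frac{d}{dt}\bigg|_{t=0} \ell_u(\gamma(t)) \;=\; \langle \nabla \ell_u(p),\, v \rangle.
\end{equation*}
As $v$ ranges over $T_p \mathcal{M}$, this says exactly that $\nabla \ell_u(p) \in (T_p \mathcal{M})^\perp = N_p \mathcal{M}$. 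By the very definition of the log-normal space in Section \ref{section:preliminaries}, this means $u \in \log N_p \mathcal{M}$, which is the first conclusion.

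For the second conclusion, let $u' \in \log \text{Vor}_\mathcal{M}(p)$ be arbitrary. Then $\Phi(u') = p$, and the smoothness hypothesis on $\mathcal{M}$ at $p$ is a statement about the model, independent of which $u'$ we chose. Applying the first part with $u$ replaced by $u'$ gives $u' \in \log N_p \mathcal{M}$. Since also $u' \in \Delta_{n-1} \subset \overline{\Delta_{n-1}}$, we conclude $u' \in \log N_p \mathcal{M} \cap \overline{\Delta_{n-1}} = \log \text{Poly}_\mathcal{M}(p)$, establishing the claimed inclusion.

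I do not expect any serious obstacle: the content is exactly the first-order condition for constrained optimization, and the only mild care needed is to verify that the objective $\ell_u$ is indeed differentiable at $p$ (which is true because $p \in \Delta_{n-1}$ has strictly positive coordinates, so $\log p_i$ is smooth there) and that curves inside $\mathcal{M}$ through $p$ really exist, which is guaranteed by the manifold hypothesis on $U \cap \mathcal{M}$.
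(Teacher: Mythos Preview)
Your proof is correct and follows essentially the same approach as the paper: both use the first-order optimality condition that a local maximizer of a smooth function on a manifold must have gradient orthogonal to the tangent space. Your version is a bit more explicit (spelling out the curve argument and the derivation of the inclusion $\log\text{Vor}_\mathcal{M}(p)\subset\log\text{Poly}_\mathcal{M}(p)$), whereas the paper phrases the same idea as a contradiction from a nonzero tangential component of $\nabla\ell_u(p)$.
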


\begin{proof}
Note that $\ell_u(x):= \sum u_i \log(x_i)$ is a smooth function on any neighborhood of $p \in \mathcal{M}$ contained in $\Delta_{n-1}$. Consider the gradient $\nabla \ell_u(p)$. $\mathbb{R}^n = T_p \mathcal{M} \oplus N_p \mathcal{M}$ and if $\nabla \ell_u(p)$ had any nonzero tangential component then there would exist some $q \in \mathcal{M}$ such that $\ell_u(q) > \ell_u(p)$, contradicting the fact that $\Phi(u) = p$.
\end{proof}

\begin{prop}\label{prop:log-voronoi-are-convex-sets}
Logarithmic Voronoi cells are convex sets.
\end{prop}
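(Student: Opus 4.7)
The plan is to mirror the proof strategy of Proposition \ref{prop:finite-models-logVor-are-polytopes} but without using finiteness. The key observation from that proof was that for each fixed $q \in \mathcal{M}$, the inequality $\ell_u(p) \geq \ell_u(q)$ is \emph{linear} in $u$. Finiteness of $\mathcal{M}$ was used only to conclude that the intersection of the resulting half-spaces is a polytope. For general $\mathcal{M}$, we will get an intersection of possibly infinitely many half-spaces, which is no longer a polytope but is still convex.

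First, I would unfold the definition: $u \in \log \text{Vor}_\mathcal{M}(p)$ if and only if $u \in \Delta_{n-1}$ and $\ell_u(p) \geq \ell_u(q)$ for every $q \in \mathcal{M}$. Since $p, q \in \Delta_{n-1}$ have strictly positive coordinates, the quantity $\log(p_i/q_i)$ is well-defined, and the condition $\ell_u(p) \geq \ell_u(q)$ rewrites as
\begin{equation*}
    \sum_{i=1}^n u_i \log\!\left(\frac{p_i}{q_i}\right) \geq 0.
\end{equation*}
For each fixed $q$, this is a linear inequality in the coordinates $u_1,\dots,u_n$, so it defines a closed half-space $H_q \subset \mathbb{R}^n$.

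Next, I would write
\begin{equation*}
    \log \text{Vor}_\mathcal{M}(p) \;=\; \Delta_{n-1} \,\cap\, \bigcap_{q \in \mathcal{M}} H_q.
\end{equation*}
Each $H_q$ is convex, the open simplex $\Delta_{n-1}$ is convex, and an arbitrary intersection of convex sets is convex. The conclusion follows immediately.

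There is essentially no obstacle here beyond noting that the argument of Proposition \ref{prop:finite-models-logVor-are-polytopes} never actually required $\mathcal{M}$ to be finite when the goal is convexity rather than polytopality; convexity is preserved under arbitrary intersections, while polytopality is not. One small point to flag for the reader is that, unlike in the finite case, the resulting intersection of infinitely many half-spaces need not be a polytope, which is consistent with the non-polytopal example illustrated in Figure \ref{fig:large-example}.
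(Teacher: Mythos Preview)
Your proof is correct and follows essentially the same approach as the paper: both observe that the logarithmic Voronoi cell is the intersection of $\Delta_{n-1}$ with the (possibly infinitely many) closed half-spaces $\sum_i u_i \log(p_i/q_i) \geq 0$ for $q \in \mathcal{M}$, and conclude convexity from the fact that arbitrary intersections of convex sets are convex. The paper's version is simply more terse.
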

\begin{proof} As in the proof of Proposition \ref{prop:finite-models-logVor-are-polytopes}, the logarithmic Voronoi cell of $p$ is defined by the inequalities $\sum_{i\in [n]} u_i \log(p_i/q_i) \geq 0$ for every $q\in \M$, each linear in $u$. Hence, the logarithmic Voronoi cell of $p$ is an intersection of (possibly infinitely many) closed half-spaces, and the result follows. 
\end{proof}

The following theorem concerns algebraic models with ML degree $1$. These were characterized in \cite{Huh2014VarietiesWithMLDegreeOne} and studied further in \cite{DuarteMariglianoSturmfels2019DiscreteStatisticalModelsRationalMaximumLikelihoodEstimator}. They include, for example, Bayesian networks and decomposable graphical models. 

\begin{theorem}\label{theorem:ML-degree-1-implies-log-Voronoi-equal-log-polytope}
Let $\mathcal{M}$ be any algebraic model with ML degree $1$ which is smooth on $\Delta_{n-1}$. Then the logarithmic Voronoi cell at every $p\in \M$ equals its log-normal polytope on $\Delta_{n-1}^\mathcal{M}$.
\end{theorem}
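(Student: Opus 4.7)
The plan is to combine Lemma~\ref{lemma:smooth-implies-gradient-in-normal-space} with the characteristic uniqueness property of ML degree $1$ models. Lemma~\ref{lemma:smooth-implies-gradient-in-normal-space} already gives one direction: for any $p\in\mathcal{M}$ and any $u\in\Delta_{n-1}^{\mathcal{M}}$ with $\Phi(u)=p$, smoothness of $\mathcal{M}$ at $p$ forces $\nabla\ell_u(p)\in N_p\mathcal{M}$, so
\begin{equation*}
    \log\text{Vor}_\mathcal{M}(p)\cap\Delta_{n-1}^{\mathcal{M}} \;\subset\; \log\text{Poly}_\mathcal{M}(p).
\end{equation*}
So the work is entirely in the reverse containment: I need to show that every $u\in\log\text{Poly}_\mathcal{M}(p)\cap\Delta_{n-1}^{\mathcal{M}}$ actually satisfies $\Phi(u)=p$.

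The first step would be to translate membership in the log-normal polytope back into a critical point condition. By definition, $u\in\log\text{Poly}_\mathcal{M}(p)$ means $\nabla\ell_u(p)\in N_p\mathcal{M}$, which (since $\mathcal{M}$ is smooth at $p$) is exactly the Lagrange multiplier condition for $p$ to be a critical point of $\ell_u$ restricted to $\mathcal{M}$. Next, since $u\in\Delta_{n-1}^{\mathcal{M}}$, the value $\Phi(u)$ exists, and applying smoothness of $\mathcal{M}$ at $\Phi(u)$ gives that $\Phi(u)$ is likewise a critical point of $\ell_u|_\mathcal{M}$. So both $p$ and $\Phi(u)$ are real critical points of $\ell_u|_\mathcal{M}$.

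The final step is to invoke the hypothesis that the ML degree of $\mathcal{M}$ is $1$. For such models, Huh's characterization in \cite{Huh2014VarietiesWithMLDegreeOne} provides a rational MLE: there is a rational map whose restriction to the domain of definition (containing a Zariski-dense open subset of $\Delta_{n-1}$) returns the unique complex critical point of $\ell_u|_{\mathcal{M}}$ in $\mathcal{M}$. In particular, whenever $\Phi(u)$ exists in $\mathcal{M}$, the critical point of $\ell_u|_\mathcal{M}$ lying in $\mathcal{M}$ is unique, so the two critical points $p$ and $\Phi(u)$ must coincide. This gives $\log\text{Poly}_\mathcal{M}(p)\cap\Delta_{n-1}^{\mathcal{M}}\subset\log\text{Vor}_\mathcal{M}(p)$, completing the equality up to the boundary of the simplex as permitted by the convention stated after Lemma~\ref{lemma:log-normal-space-linear-subspace}.

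The main obstacle is the last step: justifying that ML degree $1$, which by definition counts complex critical points only for \emph{generic} $u$, really forces uniqueness of the critical point in $\mathcal{M}$ at the specific $u\in\Delta_{n-1}^{\mathcal{M}}$ under consideration. A priori nongeneric $u$ could admit extra critical points. The remedy is to use that ML degree $1$ models are exactly those with a rational MLE, so the critical point in $\mathcal{M}$ is the unique value of a rational function wherever that function is defined, and one must verify that $\Delta_{n-1}^{\mathcal{M}}$ lies in the domain of definition (or at least that any $u$ for which $\Phi(u)$ is well defined sees a single critical point in $\mathcal{M}$). Once this is granted, the remaining inclusion is immediate.
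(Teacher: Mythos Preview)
Your argument is essentially the same as the paper's: use Lemma~\ref{lemma:smooth-implies-gradient-in-normal-space} for one containment, and for the other observe that both $p$ and $\Phi(u)$ are critical points of $\ell_u|_{\mathcal{M}}$, then invoke ML degree $1$ to force $p=\Phi(u)$. The paper simply asserts ``since the ML degree is $1$, there is only one complex critical point'' without addressing the genericity issue you flag; your care in noting that ML degree is a generic count and appealing to the rational MLE of \cite{Huh2014VarietiesWithMLDegreeOne} to handle all $u\in\Delta_{n-1}^{\mathcal{M}}$ is a refinement the paper omits.
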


\begin{proof}
We will show that $\log \text{Vor}_\mathcal{M}(p) = \log N_p \mathcal{M} \cap \Delta_{n-1}^\mathcal{M}$. Let $u \in \Delta_{n-1}$ be an element of $\log \text{Vor}_\mathcal{M}(p)$. Then $\Phi(u)=p$ and since $\mathcal{M}$ is smooth, $u \in \log N_p \mathcal{M} \cap \Delta_{n-1}^\mathcal{M}$ by Lemma \ref{lemma:smooth-implies-gradient-in-normal-space}.  For the reverse direction, let $u \in \log N_p \mathcal{M} \cap \Delta_{n-1}^\mathcal{M}$. Recall that $\Phi(u)$ is the argmax of $\ell_u(q)$ over all points $q \in \mathcal{M}$. Since $\Phi(u)$ exists and $\mathcal{M}$ is smooth, this argmax must be among the critical points of $\ell_u$ restricted to $\mathcal{M}$, which include $p$. But since the ML degree is $1$, there is only one complex critical point, and hence $\Phi(u) = p$. Therefore $u$ is in the logarithmic Voronoi cell of $p$, and the result follows.
\end{proof}

\begin{example}\label{example:two-bits-independence-example}

Consider $\mathcal{M} = V(f)$ for $f:\mathbb{C}^4 \to \mathbb{C}^2$ given by the polynomial system
\begin{equation*}
    f(x) = \begin{bmatrix} x_1 x_4 - x_2 x_3 \\ x_1 + x_2 + x_3 + x_4 - 1 \end{bmatrix}: \mathbb{C}^4 \to \mathbb{C}^2
\end{equation*}
A parametrization of this model is given by 
\begin{equation*}
    (p_1, p_2) \mapsto \left( p_1 p_2, \,\, p_1(1-p_2), \,\, (1-p_1)p_2, \,\, (1-p_1)(1-p_2) \right).
\end{equation*}
This is the \textit{independence model} on two binary random variables, and also the Segre embedding of $\mathbb{P}^1 \times \mathbb{P}^1$. The points of this $2$-dimensional model live in the 3-dimensional hyperplane $\sum x_i = 1$ inside $\mathbb{R}^4$, so we can choose a basis agreeing with this hyperplane to plot them.

For each $x\in\M$, we construct an $(m+1) \times n$ matrix $A(x)$ by augmenting the row $\nabla \ell_u$ to the Jacobian matrix $df$:
\begin{equation*}
    A(x)=\left[\begin{array}{rrrr}
x_{4} & -x_{3} & -x_{2} & x_{1} \\
1 & 1 & 1 & 1 \\ u_1/x_1 & u_2/x_2 & u_3/x_3 & u_4/x_4
\end{array}\right].
\end{equation*}
Since our model has codimension two, the $3 \times 3$ minors of $A(x)$ give linear equations describing the log-normal space.
\begin{equation*}
    \begin{array}{c}
        u_{2} - u_{3} - \frac{u_{1} x_{2}}{x_{1}} + \frac{u_{1} x_{3}}{x_{1}} + \frac{u_{2} x_{4}}{x_{2}} - \frac{u_{3} x_{4}}{x_{3}}\\
        u_{1} - u_{4} - \frac{u_{2} x_{1}}{x_{2}} + \frac{u_{1} x_{3}}{x_{1}} - \frac{u_{4} x_{3}}{x_{4}} + \frac{u_{2} x_{4}}{x_{2}}\\
        u_{1} - u_{4} + \frac{u_{1} x_{2}}{x_{1}} - \frac{u_{3} x_{1}}{x_{3}} - \frac{u_{4} x_{2}}{x_{4}} + \frac{u_{3} x_{4}}{x_{3}}\\
        u_{2} - u_{3} + \frac{u_{2} x_{1}}{x_{2}} - \frac{u_{3} x_{1}}{x_{3}} - \frac{u_{4} x_{2}}{x_{4}} + \frac{u_{4} x_{3}}{x_{4}}.
    \end{array}
\end{equation*}
Restricting this space to its intersection with the simplex $u_1 + u_2 + u_3 + u_4 - 1 = 0$ to compute the log-normal polytope, we find that the polytopes are line segments. We plot them for various points on the model in Figure \ref{figure:2by2-independence-lognormal-polytopes}. Since $\M$ has ML degree 1, Theorem \ref{theorem:ML-degree-1-implies-log-Voronoi-equal-log-polytope} tells us that log-Voronoi cells equal log-normal polytopes, so they are also line segments.
\begin{figure}[!htb]
    \centering
    \includegraphics[width=0.32\textwidth]{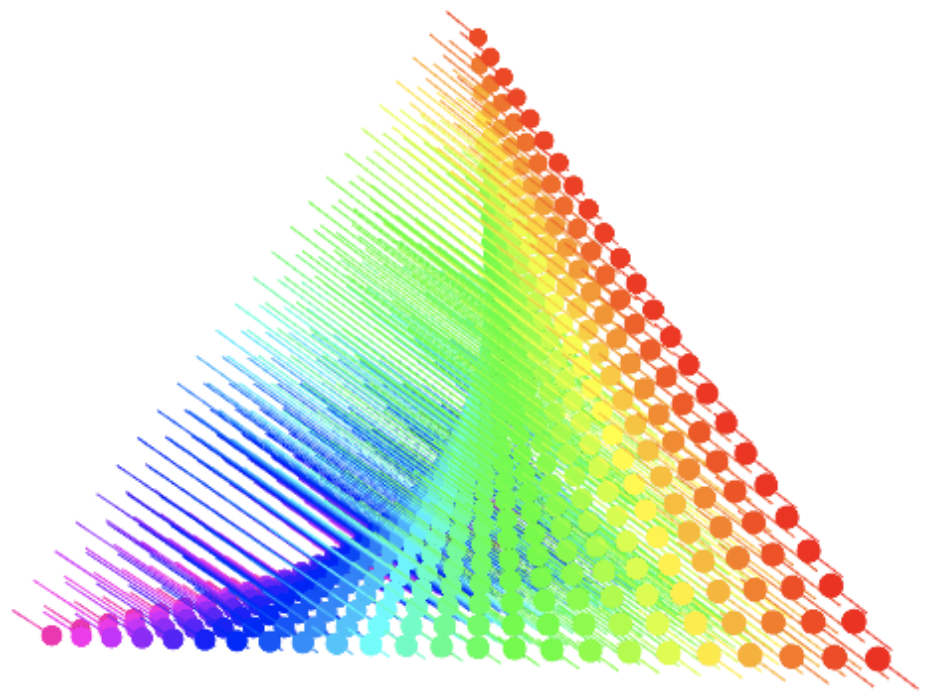} \includegraphics[width=0.32\textwidth]{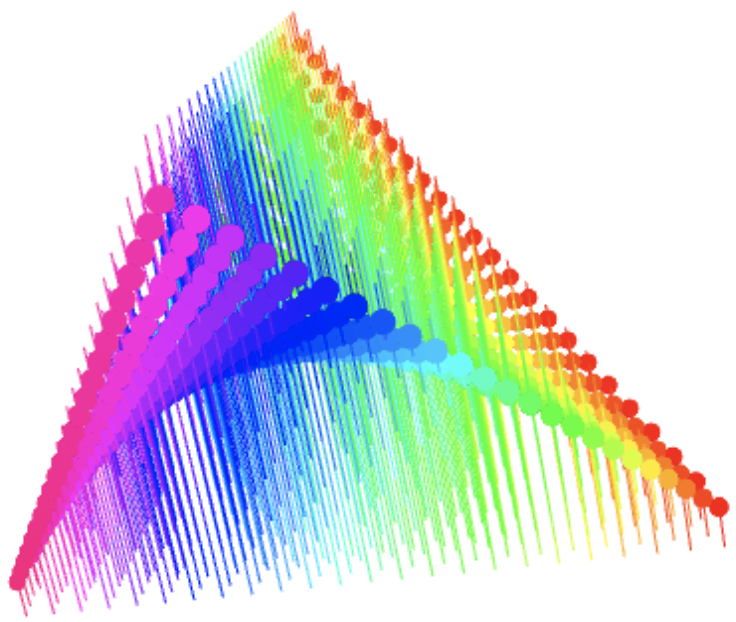} \includegraphics[width=0.32\textwidth]{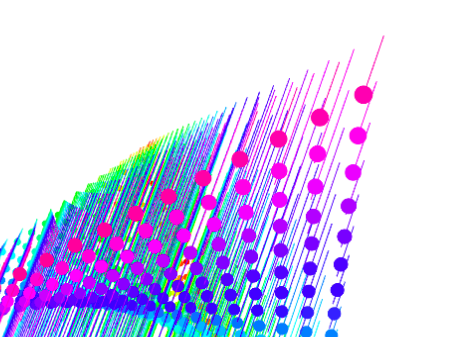}
    \caption{One-dimensional log-normal polytopes at various points}
    \label{figure:2by2-independence-lognormal-polytopes}
\end{figure}
\end{example}

The following Theorem \ref{theorem:cousin-Hardy-Weinberg} shows that the ML degree 1 condition in Theorem \ref{theorem:ML-degree-1-implies-log-Voronoi-equal-log-polytope} is sufficient, but not necessary for the equality of logarithmic Voronoi cells and interiors of respective log-normal polytopes. First consider the independence model of two identically distributed binary random variables. The natural parametrization in a statistical context leads to the Hardy-Weinberg curve defined by $x_2^2 - 4x_1x_3$, which has ML degree $1$ \cite{HuhSturmfels2014LikelihoodGeometry}. A similar-looking model, which has been called the cousin of the Hardy-Weinberg curve \cite{HostenKhetanSturmfels2005SolvingLikelihoodEquations}, is defined by the polynomial $f = x_2^2 - x_1 x_3$. In this case $n=3$ and $\mathcal{M} \subset \Delta_2 \subset \mathbb{R}^3$. It turns out that the ML degree of this model is $2$ \cite[p. 394]{HostenKhetanSturmfels2005SolvingLikelihoodEquations}.

\begin{theorem}\label{theorem:cousin-Hardy-Weinberg}
The algebraic model defined by the polynomial $f = x_2^2 - x_1 x_3$ has ML degree $2$, yet the logarithmic Voronoi cells are equal to their log-normal polytopes.
\end{theorem}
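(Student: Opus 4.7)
The plan is to exploit the explicit rational parametrization of the conic. First I would parametrize $\mathcal{M}$ by
$$ p(t) \;=\; \frac{1}{1 + t + t^2}\bigl(1,\; t,\; t^2\bigr), \qquad t \in (0,\infty), $$
which is a bijection onto $\mathcal{M}$ (both relations $p_2^2 = p_1 p_3$ and $p_1+p_2+p_3 = 1$ are clearly satisfied). Substituting into the log-likelihood collapses the expression to
$$ \ell_u(p(t)) \;=\; -\log(1 + t + t^2) + (u_2 + 2u_3)\log t, $$
so, writing $v := u_2 + 2u_3$, the critical-point condition $\frac{d}{dt}\ell_u(p(t)) = 0$ reduces to the quadratic
$$ (v-2)\,t^2 + (v-1)\,t + v \;=\; 0, $$
whose degree matches the known ML degree $2$.

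The key step is the analysis of the roots. For any $u \in \Delta_2$ one has $0 < v < 2$, so by Vieta's formulas the product of the two roots equals $v/(v-2) < 0$. Consequently the two critical points are real and of opposite sign, and exactly one of them lies in the parameter region $(0,\infty)$ corresponding to $\mathcal{M}$. Because $\ell_u(p(t)) \to -\infty$ as $t \to 0^+$ or as $t \to \infty$, this unique positive critical value must be the global maximum of $\ell_u$ on $\mathcal{M}$, and therefore equals the parameter of $\Phi(u)$.

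With this in hand the equality of sets follows quickly. The inclusion $\log\mathrm{Vor}_\mathcal{M}(p) \subset \log\mathrm{Poly}_\mathcal{M}(p)$ is immediate from Lemma \ref{lemma:smooth-implies-gradient-in-normal-space}, since $\mathcal{M}$ is smooth on $\Delta_2$. Conversely, given $u \in \log\mathrm{Poly}_\mathcal{M}(p) \cap \Delta_2$ and writing $p = p(t_0)$, the definition of the log-normal space forces $t_0$ to satisfy the quadratic above; by the analysis of its roots $t_0$ is the unique positive solution and hence $\Phi(u) = p(t_0) = p$, so $u \in \log\mathrm{Vor}_\mathcal{M}(p)$.

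The main obstacle is conceptual rather than computational: one must recognize that ML degree $2$ counts \emph{complex} critical points, while for this particular model the second critical point systematically falls outside the positive orthant for every $u \in \Delta_2$. Once this is extracted via Vieta's formulas, the proof is essentially bookkeeping; the only point requiring mild care is that the bound $v \in (0,2)$ is strict on the open simplex and degenerates only at boundary points, which the statement already excludes in the convention about equality ``as sets, excepting the boundary of the simplex.''
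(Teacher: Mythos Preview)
Your proof is correct and takes a genuinely different route from the paper. The paper argues geometrically: it computes that the log-normal polytope at every point of $\mathcal{M}$ is a line segment in the fixed direction $(1,-2,1)$, then checks that no tangent vector to the curve is parallel to this direction, so these segments are pairwise disjoint, and concludes via the general disjointness criterion (Theorem~\ref{theorem:disjoint-implies-polytopes}). You instead work analytically, using the rational parametrization $t\mapsto\frac{1}{1+t+t^2}(1,t,t^2)$ to reduce the likelihood equations to a single quadratic in $t$, and Vieta's formula $t_1 t_2 = v/(v-2)<0$ to show that for every $u\in\Delta_2$ exactly one of the two critical points lies in the positive parameter range.

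Your approach is more self-contained (no appeal to Theorem~\ref{theorem:disjoint-implies-polytopes}) and makes transparent \emph{why} the second critical point is harmless: it is always real but sits outside the positive orthant. The paper's argument, on the other hand, extracts a structural fact your computation hides---that all log-normal polytopes are parallel---and serves as an illustration of the disjointness criterion. Both arguments ultimately exploit the same phenomenon from different sides: yours shows each $u$ has a unique positive critical point, the paper's shows each positive critical point comes from a unique log-normal polytope.
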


\begin{proof}
Calculate the Jacobian matrix of Lemma \ref{lemma:log-normal-space-linear-subspace} by taking the gradients of $f = x_2^2 - x_1 x_3$ and $g = x_1 + x_2 + x_3 -1$, augmenting this matrix with an additional row of the $u_i/x_i$. Consider the equation of the plane given by the determinant of this matrix. Note that $\M$ is a curve in $\Delta_2$, so the log-normal space at each point is defined by the vanishing of the determinant at that point. This plane has normal vector given by
\begin{equation*}
    \left( \begin{array}{c}
        2 \, x_{1} x_{2}^{2} - x_{1}^{2} x_{3} + x_{1} x_{2} x_{3} + x_{1} x_{3}^{2} \\
        -2 \, x_{1} x_{2}^{2} - 2 \, x_{1} x_{2} x_{3} - 2 \, x_{2}^{2} x_{3} \\
        x_{1}^{2} x_{3} + x_{1} x_{2} x_{3} + 2 \, x_{2}^{2} x_{3} - x_{1} x_{3}^{2}
    \end{array} \right)
\end{equation*}
where $(x_1,x_2,x_3)$ is any point in the common zero set of $f$ and $g$. Consider the cross-product of this vector with the all ones vector, which will give us the direction vector of the log-normal polytope at $(x_1,x_2,x_3)$. Computing and simplifying each coordinate in the quotient ring
\begin{equation*}
    \mathbb{Q}[x_{1}, x_{2}, x_{3}]/\left(x_{1} + x_{2} + x_{3} - 1, -x_{2}^{2} + x_{1} x_{3}\right)\mathbb{Q}[x_{1}, x_{2}, x_{3}],
\end{equation*}
we find that this cross product is given by
\begin{equation*}
\left( \begin{array}{c}
-{\left(x_{2} + x_{3} - 1\right)} x_{3}\\
2 \, {\left(x_{2} + x_{3} - 1\right)} x_{3}\\
-{\left(x_{2} + x_{3} - 1\right)} x_{3}
\end{array} \right)=x_1x_3\left( \begin{array}{c}
1\\
-2\\
1
\end{array} \right).
\end{equation*}

This means that regardless of the point on the curve, the log-normal polytopes will be line segments whose direction vector is $(-1,2,-1)$. We claim that for any distinct $p,q \in \mathcal{M}$ the corresponding line segments are disjoint. Consider the tangent space at some point $x$ in the intersection of $\Delta_2$ and the common zero set of $f$ and $g$. Applying Gaussian elimination to the $2\times 3$ Jacobian matrix, it can be shown that if $2x_2 + x_3 \neq 0$ then all tangent vectors are multiples of
\begin{equation}\label{equation:tangent-vector}
    \left( \frac{x_3 - x_1}{2x_2 + x_3} - 1, \frac{x_1-x_3}{2x_2+x_3}, 1 \right),
\end{equation}
while if $2x_2 + x_3 = 0$ then all tangent vectors are multiples of $(-1,1,0)$. In neither case is it possible that a tangent vector is parallel to $(1,-2,1)$. For $(-1,1,0)$ this is obvious, but for (\ref{equation:tangent-vector}), a contradiction can be derived by showing that if the vector is parallel to $(1,-2,1)$ the first and the last coordinates in (\ref{equation:tangent-vector}) are equal, forcing $x_1 + 4x_2 + x_3 = 0$. But on $\Delta_2$ all coordinates are positive. Thus no line parallel to $(1,-2,1)$ meets the model in two distinct points. We conclude the log-normal polytopes are disjoint, and the result follows from Theorem \ref{theorem:disjoint-implies-polytopes}. 
\end{proof}

\begin{theorem}\label{theorem:disjoint-implies-polytopes}
Let $\mathcal{M}$ be any model smooth on $\Delta_{n-1}$. If all log-normal polytopes for each point $p \in \mathcal{M}$ are disjoint, then the logarithmic Voronoi cells equal log-normal polytopes~on~ $\Delta_{n-1}^\mathcal{M}$.
\end{theorem}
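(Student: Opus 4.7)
The plan is to prove the set equality $\log \text{Vor}_\mathcal{M}(p) = \log \text{Poly}_\mathcal{M}(p) \cap \Delta_{n-1}^\mathcal{M}$ for each $p \in \mathcal{M}$ by establishing both inclusions, using the previously proved Lemma \ref{lemma:smooth-implies-gradient-in-normal-space} twice and invoking the disjointness hypothesis to rule out any ambiguity in which polytope a given $u$ belongs to.

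First I would fix $p \in \mathcal{M}$ and handle the easy direction $\log \text{Vor}_\mathcal{M}(p) \subset \log \text{Poly}_\mathcal{M}(p) \cap \Delta_{n-1}^\mathcal{M}$: if $u \in \log \text{Vor}_\mathcal{M}(p)$ then $\Phi(u) = p$, so in particular $u \in \Delta_{n-1}^\mathcal{M}$, and since $\mathcal{M}$ is smooth at $p$, Lemma \ref{lemma:smooth-implies-gradient-in-normal-space} gives $u \in \log N_p \mathcal{M}$; combined with $u \in \Delta_{n-1} \subset \overline{\Delta_{n-1}}$, this places $u$ in $\log \text{Poly}_\mathcal{M}(p)$.

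For the reverse inclusion, let $u \in \log \text{Poly}_\mathcal{M}(p) \cap \Delta_{n-1}^\mathcal{M}$. By definition of $\Delta_{n-1}^\mathcal{M}$ the point $q := \Phi(u)$ exists in $\mathcal{M}$, and applying Lemma \ref{lemma:smooth-implies-gradient-in-normal-space} at the smooth point $q$ shows that $u \in \log N_q \mathcal{M} \cap \overline{\Delta_{n-1}} = \log \text{Poly}_\mathcal{M}(q)$. Thus $u$ lies simultaneously in $\log \text{Poly}_\mathcal{M}(p)$ and $\log \text{Poly}_\mathcal{M}(q)$. The disjointness hypothesis forces $p = q$, so $\Phi(u) = p$ and $u \in \log \text{Vor}_\mathcal{M}(p)$.

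I do not anticipate a serious obstacle here: once Lemma \ref{lemma:smooth-implies-gradient-in-normal-space} is in hand, the argument reduces to the pigeonhole observation that a point in two disjoint polytopes would be a contradiction. The only subtlety worth flagging explicitly in the write-up is the convention (stated just after Lemma \ref{lemma:log-normal-space-linear-subspace}) that equality of logarithmic Voronoi cells and log-normal polytopes is understood modulo the boundary of the simplex, which is why the statement restricts to $\Delta_{n-1}^\mathcal{M}$ rather than all of $\overline{\Delta_{n-1}}$.
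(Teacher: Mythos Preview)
Your proposal is correct and follows essentially the same route as the paper: both directions are handled by Lemma~\ref{lemma:smooth-implies-gradient-in-normal-space}, and the reverse inclusion is finished by observing that $u$ lies in both $\log\text{Poly}_\mathcal{M}(p)$ and $\log\text{Poly}_\mathcal{M}(\Phi(u))$, so disjointness forces $\Phi(u)=p$. The paper additionally remarks that $p$ is among the critical points of $\ell_u$, but this is not needed for the argument and your write-up is slightly cleaner in that respect.
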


\begin{proof}
We will show that $\log \text{Vor}_\mathcal{M}(p) = \log N_p \mathcal{M} \cap \Delta_{n-1}^\mathcal{M}$. The $\subset$ direction follows from Lemma \ref{lemma:smooth-implies-gradient-in-normal-space}. For the reverse direction, let $u \in \log N_p \mathcal{M} \cap \Delta_{n-1}^\mathcal{M}$. Recall that $\Phi(u)$ is the argmax of $\ell_u(q)$ over all points $q \in \mathcal{M}$. Since $\Phi(u)$ exists and $\mathcal{M}$ is smooth, this argmax must be among the critical points of $\ell_u$ restricted to $\mathcal{M}$, which include $p$. If $\Phi(u)$ were not equal to $p$ then $u$ would be in the intersection of $\Delta_{n-1}$ with the log-normal space to the point $\Phi(u) \in \mathcal{M}$. But the log-normal polytopes were assumed to be disjoint by the hypothesis. Therefore $\Phi(u) = p$, which means that $u\in\log\text{Vor}_\M(p)$, and the result follows.
\end{proof}

Let $f_1(\theta),\cdots,f_r(\theta)$ be nonzero linear polynomials in $\theta$ such that $\sum_{i=1}^rf_i(\theta)=1$. Let $\Theta$ be the set such that $f_i(\theta)>0$ for all $\theta\in \Theta$ and suppose that $\dim\Theta=d$. The model $\M=f(\Theta)\subseteq \Delta_{r-1}$ is called a \textit{discrete linear model} \cite[p.152]{Sullivant2018AlgebraicStatistics}. Linear models appear in \cite[Section 1.2]{PachterSturmfels2005AlgebraicStatisticsForComputationalBiology}. An example is DiaNA's model in Example 1.1 of \cite{PachterSturmfels2005AlgebraicStatisticsForComputationalBiology}.

\begin{theorem}
Let $\mathcal{M}$ be a linear model. Then the logarithmic Voronoi cells are equal to their log-normal polytopes.
\end{theorem}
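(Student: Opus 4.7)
The plan is to pass to the parameter space $\Theta$ and exploit that the reparametrized log-likelihood is a concave function of $\theta$. First I dispose of the easy direction: since $\mathcal{M}$ is the image under the affine-linear map $f$ of the open convex set $\Theta$, the tangent space at each $p = f(\theta_0) \in \mathcal{M}$ is the constant subspace $\text{image}(df)$, and in particular $\mathcal{M}$ is smooth throughout $\Delta_{r-1}^\mathcal{M}$. Lemma~\ref{lemma:smooth-implies-gradient-in-normal-space} then yields the containment $\log\text{Vor}_\mathcal{M}(p) \subseteq \log\text{Poly}_\mathcal{M}(p)$ for free.

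For the reverse, fix $p \in \mathcal{M}$ and $u \in \log N_p\mathcal{M} \cap \Delta_{r-1}^\mathcal{M}$; my goal is to show that $p$ is a global maximum of $\ell_u$ on $\mathcal{M}$, so that $\Phi(u) = p$. I would reparametrize by setting
\[
    L(\theta) \,:=\, \ell_u(f(\theta)) \,=\, \sum_{i=1}^{r} u_i\,\log f_i(\theta), \qquad \theta \in \Theta.
\]
Since each $f_i$ is affine-linear and strictly positive on $\Theta$, each $\log f_i$ is concave on $\Theta$; combined with $u_i > 0$, the sum $L$ is a nonnegative combination of concave functions on the convex set $\Theta$ and is therefore concave. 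Consequently, any critical point of $L$ on $\Theta$ is automatically a global maximum, by the first-order characterization of maxima of concave functions.

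It then remains to check that any $\theta_0 \in \Theta$ with $f(\theta_0) = p$ is a critical point of $L$. By the chain rule, $\nabla_\theta L(\theta_0) = (df|_{\theta_0})^T \nabla_p \ell_u(p)$, and the hypothesis $\nabla_p \ell_u(p) \in N_p \mathcal{M} = (\text{image}\, df)^\perp$ forces this vector to vanish. Hence $\theta_0$ maximizes $L$ globally on $\Theta$, so $\ell_u(f(\theta)) \leq \ell_u(p)$ for every $\theta \in \Theta$, giving $\Phi(u) = p$ as desired. I do not foresee a genuine obstacle here: the only substantive input is the concavity of $\log$ composed with a positive affine-linear function, after which everything reduces to first-order calculus and the definition of the log-normal space.
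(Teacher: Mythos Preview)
Your proof is correct and follows essentially the same strategy as the paper: both exploit concavity of the log-likelihood restricted to the linear model to conclude that the critical-point condition (membership in $\log N_p\mathcal{M}$) forces a global maximum. The only cosmetic difference is that the paper argues directly in the simplex, using that $\ell_u$ is strictly concave on $\Delta_{n-1}$ and hence on the convex subset $\mathcal{M}$, whereas you pull back to the parameter space $\Theta$ and verify concavity of the composite $L = \ell_u \circ f$ there; these are equivalent viewpoints.
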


\begin{proof}
We will show that $\log \text{Vor}_\mathcal{M}(p) = \log N_p \mathcal{M} \cap \Delta_{n-1}$. The $\subset$ direction follows from Lemma \ref{lemma:smooth-implies-gradient-in-normal-space} since an affine linear subspace intersected with $\Delta_{n-1}$ is smooth. For the reverse direction, let $u \in \log N_p \mathcal{M} \cap \Delta_{n-1}$. We must show $\Phi(u) = p$. Since $\ell_u$ is strictly concave on $\Delta_{n-1}$, it is strictly concave when restricted to any convex subset, such as the affine-linear subspace $\mathcal{M}$. Therefore there is only one critical point. Since $\mathcal{M}$ is smooth, $u$ must be in the log-normal space of $\Phi(u)$, and so $\Phi(u)$ must be $p$.
\end{proof}

Next we consider log-linear, or toric, models. These include many important families of statistical models, such as undirected graphical models \cite{GeigerMeekSturmfelsToricAlgebraOfGraphicalModels}, independence models \cite{Sullivant2018AlgebraicStatistics}, and others as mentioned in the introduction. For an $m\times n$ integer matrix $A$ with $\boldsymbol{1}\in\text{rowspan}(A)$, the corresponding \textit{log-linear model} $\M_A$ is defined to be the set of all points $p\in\Delta_{n-1}$ such that $\log(p)\in\text{rowspan}(A)$ \cite[p. 122]{Sullivant2018AlgebraicStatistics}.

\begin{theorem}\label{theorem:log-linear-models-imply-voronoi-equals-polytopes}
Let $A\in \ZZ^{m\times n}$ be an integer matrix such that $\boldsymbol{1}\in\text{rowspan }A$. Let $\M$ be the associated log-linear (toric) model. Then for any point $p\in\M$, the log-Voronoi cell of $p$ is equal to the log-normal polytope at $p$.
\end{theorem}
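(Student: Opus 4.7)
The plan is to prove the stronger statement that both $\log \text{Vor}_\mathcal{M}(p)$ and $\log \text{Poly}_\mathcal{M}(p)$ coincide, set-theoretically on $\Delta_{n-1}$, with the affine slice
\[ P_p := \{u\in\Delta_{n-1}:Au=Ap\}. \]
The key identity driving everything is that for any $q\in\mathcal{M}$ with $\log q=A^T\eta$, one has $\ell_u(q)=u^TA^T\eta=(Au)^T\eta$, so that $\ell_u(q)$ depends on the data $u$ only through the sufficient statistic $Au$.

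First I would compute the log-normal space. Writing $\log p=A^T\theta$ and differentiating, tangent vectors to the toric variety through $p$ have the form $D_pA^T\alpha$, where $D_p$ denotes the diagonal matrix with entries $p_i$. Intersecting with the tangent hyperplane $\boldsymbol{1}^Tv=0$ imposes $(Ap)^T\alpha=0$, which uses the relation $\boldsymbol{1}=A^T\mu$ granted by the hypothesis. A short orthogonality calculation then yields
\[ N_p\mathcal{M}=\{w\in\mathbb{R}^n:AD_pw\in\mathbb{R}\cdot Ap\}. \]
Pulling this back along $\nabla\ell_u(p)=D_p^{-1}u$, one sees $u\in\log N_p\mathcal{M}$ if and only if $Au\in\mathbb{R}\cdot Ap$. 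Applying $\mu^T$ to both sides and using $\mu^TA=\boldsymbol{1}^T$ together with $\boldsymbol{1}^Tu=\boldsymbol{1}^Tp=1$ forces the scalar to equal $1$, so $\log N_p\mathcal{M}\cap\Delta_{n-1}=P_p$.

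Next I would identify $P_p$ with the logarithmic Voronoi cell. If $u\in P_p$, the key identity gives $\ell_u(q)=\ell_p(q)$ for every $q\in\mathcal{M}$; since $\ell_p(p)-\ell_p(q)=D_{KL}(p\,\|\,q)\geq 0$ with equality only at $q=p$, we conclude $\Phi(u)=p$, so $u\in\log\text{Vor}_\mathcal{M}(p)$. Conversely, if $u\in\log\text{Vor}_\mathcal{M}(p)$, smoothness of the toric model at positive points together with Lemma \ref{lemma:smooth-implies-gradient-in-normal-space} places $u$ in $\log N_p\mathcal{M}$, whence $Au=Ap$ by the previous paragraph.

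The main obstacle is the tangent/normal space bookkeeping, which must cleanly separate the cone direction along $\operatorname{rowspan}(A)$ from the affine constraint $\sum x_i=1$; the distinguished vector $\mu$ with $A^T\mu=\boldsymbol{1}$ is precisely what is designed to handle this. A complementary route would be to apply Theorem \ref{theorem:disjoint-implies-polytopes}: since the moment map $p\mapsto Ap$ is injective on $\mathcal{M}$ (a form of Birch's theorem), the slices $\{u\in\Delta_{n-1}:Au=Aq\}$ over distinct $q\in\mathcal{M}$ are disjoint, so the first paragraph reduces the theorem to a direct application of that earlier result.
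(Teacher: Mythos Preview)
Your proposal is correct, and the overall architecture---show $\log\mathrm{Vor}_{\mathcal{M}}(p)\subset\log N_p\mathcal{M}\cap\Delta_{n-1}$ via Lemma~\ref{lemma:smooth-implies-gradient-in-normal-space}, then prove the reverse inclusion---matches the paper. Where you diverge is in the execution of the reverse inclusion and in how much you make explicit.

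The paper's argument is a black-box appeal to Birch's theorem: it asserts that $\ell_u$ is strictly concave on $\mathcal{M}$ for positive $u$, so the unique positive critical point is the unique $p\in\mathcal{M}$ with $Ap=Au$, and therefore $\Phi(u)=p$. Your argument instead (i) explicitly computes $\log N_p\mathcal{M}\cap\Delta_{n-1}=\{u\in\Delta_{n-1}:Au=Ap\}$ by a direct tangent/normal space calculation, and (ii) proves $\Phi(u)=p$ for such $u$ via the sufficient-statistic identity $\ell_u(q)=(Au)^T\eta$ together with nonnegativity of $D_{KL}(p\,\|\,q)$. This is essentially a self-contained proof of the relevant consequence of Birch's theorem, so it is more elementary and also yields the identification of the log-normal polytope with the moment-map fiber $\{Au=Ap\}$---a fact the paper records separately as Corollary~\ref{corollary:moment-map}. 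The paper's version is shorter because it outsources both the concavity and the fiber description to the literature; yours buys independence from those citations and makes the moment-map picture visible inside the proof. Your alternative route through Theorem~\ref{theorem:disjoint-implies-polytopes} is also valid and is not the path the paper takes.
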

\begin{proof}
We will show that $\log \text{Vor}_\mathcal{M}(p) = \log N_p \mathcal{M} \cap \Delta_{n-1}$. The forward direction follows from Lemma \ref{lemma:smooth-implies-gradient-in-normal-space}, since these models are smooth off the coordinate hyperplanes (see  \cite[p.150]{Sullivant2018AlgebraicStatistics} and \cite{AmendolaHostenRodriguezSturmfels2019MaximumLikelihoodDegreeToricVarieties}). For the reverse direction, let $u \in \log N_p \mathcal{M}$. Although the log-likelihood function can have many complex critical points, it is strictly concave on log-linear models $\mathcal{M}$ for positive $u$, in particular for $u \in \Delta_{n-1}$. This means that there is exactly one critical point in the positive orthant, and it is the unique solution $p \in \M$ to the linear system $Ap=Au$. \cite[Prop. 2.1.5]{DrtonSturmfelsSullivant2009LecturesOnAlgebraicStatistics}. This is known as \textit{Birch's Theorem}. It follows that $\Phi(u)=p$,~as~desired.
\end{proof}

As a corollary, the polytopes shown in Figure \ref{figure:twisted-cubic-lognormal-polytopes} and Figure \ref{figure:2by2-independence-lognormal-polytopes} are logarithmic Voronoi cells. Following \cite{MichalekSturmfels2019InvitationNonlinearAlgebraTEXT}, define the map sending a point in projective space to a convex combination of the columns $a_i$ of $A$, so that the image is a polytope, namely
\begin{align*}
    \phi_A:\mathbb{P}_\mathbb{C}^{n-1} &\to \mathbb{R}^m\\
    z &\mapsto \frac{1}{\sum_{i=1}^n |z_i|} \sum_{i = 1}^n |z_i| a_i.
\end{align*}
This restricts to what \cite[p.120]{MichalekSturmfels2019InvitationNonlinearAlgebraTEXT} calls the \textit{algebraic moment map} $\phi_A|_{\mathcal{M}_A} = \mu_A:\mathcal{M}_A \to \mathbb{R}^m$, where $\mathcal{M}_A$ is the projective toric variety associated to $A$. The maximum likelihood estimator, then, is the map $\mu_A^{-1} \circ \phi_A$ restricted to $\Delta_{n-1}$, identified as a subset of $\mathbb{P}_\mathbb{C}^{n-1}$ by extending scalars and using the quotient map defining projective space. The fact \cite[Corollary 8.24]{MichalekSturmfels2019InvitationNonlinearAlgebraTEXT} that there is a unique preimage, allowing the definition of $\mu_A^{-1}$, played a crucial role in Theorem \ref{theorem:log-linear-models-imply-voronoi-equals-polytopes}. Thus we have the following
\begin{cor}\label{corollary:moment-map}
For toric models, the logarithmic Voronoi cells are the preimages $\phi_A^{-1}(\mu_A(p))$ intersected with $\Delta_{n-1}$. Thus, $\phi_A|_{\Delta_{n-1}}$ is a map whose image is a polytope and whose fibres are also polytopes.
\end{cor}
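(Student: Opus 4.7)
My plan is to leverage Theorem~\ref{theorem:log-linear-models-imply-voronoi-equals-polytopes} together with a direct computation of the map $\phi_A$ restricted to the simplex. The key observation is that for any $u \in \Delta_{n-1}$, all coordinates $u_i$ are positive, so the absolute values in the definition of $\phi_A$ disappear, and $\sum_{i=1}^n |u_i| = \sum_{i=1}^n u_i = 1$. Consequently $\phi_A(u) = \sum_{i=1}^n u_i a_i = Au$, i.e.\ on the simplex $\phi_A$ is simply the restriction of the linear map $u \mapsto Au$. The same identification applies to $\mu_A(p) = Ap$ for $p \in \mathcal{M}_A \subset \Delta_{n-1}$.

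From here the first assertion falls out immediately: $\phi_A^{-1}(\mu_A(p)) \cap \Delta_{n-1} = \{u \in \Delta_{n-1} : Au = Ap\}$, and Birch's Theorem, as invoked in the proof of Theorem~\ref{theorem:log-linear-models-imply-voronoi-equals-polytopes}, tells us exactly that $\Phi(u) = p$ if and only if $Au = Ap$ for $u \in \Delta_{n-1}$. So this preimage is precisely $\log \text{Vor}_{\mathcal{M}_A}(p)$.

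For the second assertion, the image of $\phi_A|_{\Delta_{n-1}}$ is $\{Au : u \in \Delta_{n-1}\}$, which is the relative interior of the convex hull of the columns $a_1,\dots,a_n$ of $A$; this is a polytope (its closure is the lattice polytope associated with $A$, and we already identify the Voronoi cell with the polytope on $\Delta_{n-1}^{\mathcal{M}}$ modulo boundary as in Section~\ref{section:preliminaries}). Each fibre $\phi_A^{-1}(b) \cap \Delta_{n-1}$ is the intersection of the affine subspace $\{u : Au = b\}$ with $\Delta_{n-1}$, hence a polytope. Combining the two statements, the fibres of $\phi_A|_{\Delta_{n-1}}$ over points of $\mu_A(\mathcal{M}_A)$ are precisely the logarithmic Voronoi cells.

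Nothing here requires a hard step: the entire content was already packaged into Theorem~\ref{theorem:log-linear-models-imply-voronoi-equals-polytopes} via Birch's Theorem. The only care needed is the bookkeeping between the projective definition of $\phi_A$ (with absolute values and normalization) and its affine restriction to $\Delta_{n-1}$, together with the standard remark about identifying log-Voronoi cells with polytopes up to boundary points of the simplex.
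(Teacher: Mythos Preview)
Your proposal is correct and follows essentially the same approach as the paper. The paper presents this corollary as an immediate consequence of the preceding discussion---namely that the MLE is $\mu_A^{-1}\circ\phi_A$ restricted to $\Delta_{n-1}$, together with Theorem~\ref{theorem:log-linear-models-imply-voronoi-equals-polytopes} and Birch's Theorem---and does not give a separate formal proof; you have simply unpacked the same reasoning explicitly, in particular by observing that $\phi_A(u)=Au$ on the open simplex.
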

For the Segre of Example \ref{example:two-bits-independence-example}, the image is a square and the fibres are line segments, depicted in Figure \ref{fig:toric-fibre-and-image}, which adjoins our Figure \ref{figure:2by2-independence-lognormal-polytopes} with \cite[Figure 2, p.121]{MichalekSturmfels2019InvitationNonlinearAlgebraTEXT}. For more on the algebraic moment map, see~\cite{Sottile2003ToricIdealsRealToricVarietiesMomentMap}. 

\begin{figure}[!htb]
    \centering
    \includegraphics[width=0.9\textwidth]{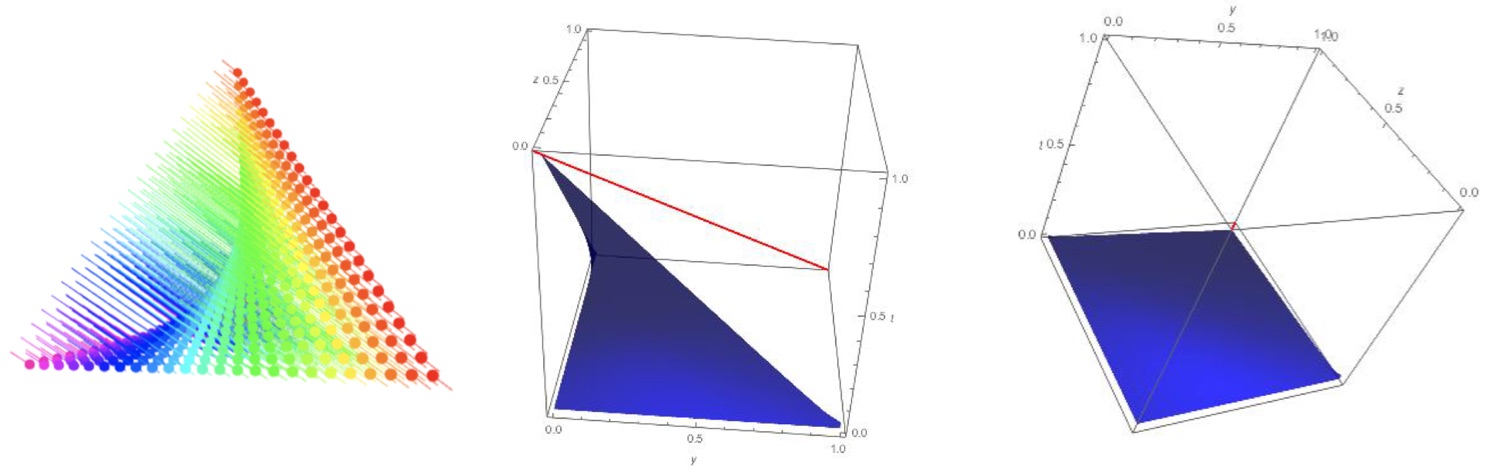}
    \caption{The fibres and image of the moment map for the Segre of Example \ref{example:two-bits-independence-example}}
    \label{fig:toric-fibre-and-image}
\end{figure}

\textbf{Some open questions.} When $\mathcal{M}$ does not equal its log-normal polytope, an interesting open question is how to describe the boundary of the logarithmic Voronoi cells. For Euclidean Voronoi cells of algebraic varieties, this was studied in \cite{VoronoiCifuentesRanestadSturmfelsWeinstein2018}. In particular, are the boundaries algebraic or transcendental? Initial investigations suggest they are transcendental. In addition, when models include singular points, what can we say about the Voronoi cells of the singular locus? This is relevant for the important families of mixture models and secant varieties as in Example \ref{example:segre-mixture-model-numerical-algebraic-geometry}, discussed in Section \ref{section:numerical-algebraic-geometry}. Also, for matrices and tensors of fixed nonnegative rank the geometry is more complicated, and it would be interesting to study logarithmic Voronoi cells in this context, possibly in relation to the basins of attraction of the EM algorithm \cite{KubjasRobevaSturmfels2015FixedPointsEMAlgorithmNonnegativeRankBoundaries}. Finally, we have focused on the discrete case, but continuous distributions could also be investigated. One promising case is linear Gaussian covariance models \cite{Anderson1970CovarianceMatricesLinear}, since their maximum likelihood estimation is an algebraic optimization problem over a spectrahedral cone.

\section{Logarithmic Voronoi cells with numerical algebraic geometry}\label{section:numerical-algebraic-geometry}

An implicit algebraic statistical model $\mathcal{M} \subset \Delta_{n-1}$ is equal to the intersection of $\Delta_{n-1}$ with the zero set of some polynomial map $f:\mathbb{R}^n \to \mathbb{R}^m$, which means that each of the $m$ component functions $f_1,\dots,f_m$ are polynomials in $n$ variables with real coefficients.

\begin{definition}
Let $f$ be the $1 \times m$ row vector whose entries are the polynomials $f_1,\dots,f_m$ in the variables $x_1,\dots,x_n$. We assume that the first polynomial defines the simplex, i.e. $f_1 = \sum_{i=1}^n x_i - 1$.  Let the algebraic set defined by $f_1,\dots,f_m$ have codimension $c$. Let $df$ denote the $m \times n$ Jacobian matrix whose rows are the gradients of $f_1,\dots,f_m$. Let $A$ be a $c \times m$ matrix whose entries are chosen randomly from independent normal distributions. Let $B$ be a similarly chosen random $(m-c) \times (n+c)$ matrix. Let $\left[ \lambda -1 \right]$ be the row vector of length $c+1$ whose first $c$ entries are variables $\lambda_1,\dots,\lambda_c$ and whose last entry is $-1$ and let $I_{n+c}$ be the identity matrix of size $n+c$. We are interested in the following vector equation whose components give $n+c$ polynomial equations in $n+c$ unknowns:
\begin{equation}\label{equation:numerical-algebraic-geometry-formulation}
    \underbrace{\left[ \begin{array}{cc}
        \left[ \lambda -1 \right] \left[ \begin{array}{c}
            A \cdot df \\
            \nabla \ell_u
        \end{array} \right] & f
    \end{array} \right]}_{1 \times (n+m)} \left[ \begin{array}{c}
        I_{n+c} \\
        B
    \end{array} \right] = \underbrace{\left[ 0 \, \cdots \, 0 \right]}_{1 \times (n+c)}.
\end{equation}
\end{definition}

\begin{theorem}\label{theorem:formulation-numerical-algebraic-geometry}
Let $\mathcal{M}$ be the intersection of $\Delta_{n-1}$ and an irreducible algebraic model given by the polynomial map $f:\mathbb{R}^n \to \mathbb{R}^m$. Let $u \in \Delta_{n-1}$ be fixed and generic. With probability $1$, all points $p \in \mathcal{M}$ such that $u \in \log N_p \mathcal{M}$ are among the finitely many isolated solutions to the square system of equations given in (\ref{equation:numerical-algebraic-geometry-formulation}).
\end{theorem}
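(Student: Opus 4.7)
The plan is to (1) unpack the matrix product in (\ref{equation:numerical-algebraic-geometry-formulation}) into $n+c$ scalar equations in the $n+c$ unknowns $(x_1,\dots,x_n,\lambda_1,\dots,\lambda_c)$, (2) verify that every $p$ of the desired form produces a solution, and (3) argue via a Bertini-type dimension count that the entire solution set is zero-dimensional with probability $1$.

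For step~(1), set $v := [\lambda,-1]\bigl[\begin{smallmatrix}A\cdot df\\ \nabla\ell_u\end{smallmatrix}\bigr]$. A direct expansion shows that the $j$-th coordinate of the left-hand side of (\ref{equation:numerical-algebraic-geometry-formulation}) equals $v_j + \sum_{k=1}^{m-c}f_{c+k}B_{k,j}$ for $j\in[n]$, while the $(n+i)$-th coordinate equals $f_i + \sum_{k=1}^{m-c}f_{c+k}B_{k,n+i}$ for $i\in[c]$. Whenever $p\in\mathcal{M}$, every $f_j(p)$ vanishes and the whole system collapses to $v(p,\lambda)=0$. For step~(2), this residual equation says $\nabla\ell_u(p) = \sum_{i=1}^{c}\lambda_i(A\cdot df(p))_i$, i.e.\ $\nabla\ell_u(p)\in\text{rowspan}(A\cdot df(p))$. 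Since $\mathcal{M}$ is irreducible of codimension $c$ and $p$ is a smooth point, $df(p)$ has rank $c$ with row span equal to $N_p\mathcal{M}$; because the entries of $A$ are independent Gaussians, with probability $1$ the rows of $A\cdot df(p)$ still form a basis of $N_p\mathcal{M}$. Thus $v(p,\lambda)=0$ has a unique solution $\lambda$ exactly when $u\in\log N_p\mathcal{M}$, producing the asserted solution $(p,\lambda)$ for every such $p$.

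The hardest step is~(3): verifying that the $n+c$ equations in $n+c$ unknowns actually cut out an isolated set in $\mathbb{C}^{n+c}$. My proposed approach is to study the incidence variety $\mathcal{I}=\{(x,\lambda,A,B,u):(\ref{equation:numerical-algebraic-geometry-formulation})\text{ holds}\}$ and carry out a dimension count. The component of $\mathcal{I}$ sitting over the smooth locus of $\mathcal{M}$ is parametrised by $(p,u)$ with $p\in\mathcal{M}^{\mathrm{sm}}$ and $u\in\log N_p\mathcal{M}$, with $\lambda$ determined as in step~(2), so it has the expected dimension; upper semicontinuity of fibre dimension then forces the generic fibre over $(A,B,u)$ to be zero-dimensional. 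The delicate point is controlling possible spurious components supported off $V(f)$: the ``mixed'' equations $f_i + \sum_k f_{c+k}B_{k,n+i}=0$ describe a codimension-$c$ complete intersection that properly contains $V(f)$, so one must invoke Bertini for the $c$ generic linear combinations of $f_1,\dots,f_m$ coming from the columns of $B$ to conclude that their residual components contribute only finitely many $(x,\lambda)$. Combined with the finiteness of ML critical points of $\ell_u$ on $\mathcal{M}$ for generic $u$, this yields the required isolation.
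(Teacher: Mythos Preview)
Your unpacking in step~(1) and the verification in step~(2) are correct and essentially mirror the paper's argument: once $p\in\mathcal{M}$, all $f_k(p)$ vanish, the system collapses to $\nabla\ell_u(p)\in\mathrm{rowspan}(A\cdot df(p))$, and since a generic $A$ preserves the row span $N_p\mathcal{M}$ at smooth points, this has a (unique) $\lambda$-solution precisely when $u\in\log N_p\mathcal{M}$.

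Step~(3), however, is aimed at the wrong target. You set out to prove that the entire solution set of the square system is zero-dimensional, but the theorem does not assert this and it need not be true: the squared system may well acquire positive-dimensional components supported off $V(f)$ or over the singular locus. What the statement actually claims is only that the finitely many critical points lie among the \emph{isolated} solutions of the square system. The paper's route to this is much shorter than your incidence-variety dimension count. First it cites \cite[Theorem~1.6]{HuhSturmfels2014LikelihoodGeometry} to conclude that for generic $u$ the critical points of $\ell_u$ on $\mathcal{M}$ are finite in number, hence they are isolated solutions of the overdetermined $(n+m)$-equation system $[\,v\mid f\,]=0$. Then it invokes the standard ``squaring-down'' form of Bertini's theorem (\cite[Theorem~9.3]{BatesSommeseHauensteinWampler2013NumericallySolvingPolynomialsWithBertini} or \cite[Theorem~A.8.7]{SommeseWampler2005NumericalSolutionSystemsPolynomialsTEXT}): with probability~$1$ over the random matrix $B$, every isolated solution of the overdetermined system remains an isolated solution of the resulting square system. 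That is the whole argument; no control of residual components and no fibre-dimension semicontinuity is required.
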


\begin{proof}
We first refer to \cite[Theorem 1.6]{HuhSturmfels2014LikelihoodGeometry}, which defines the projection map $\text{pr}_2$ and proves that it is generically finite-to-one. As a consequence, if $u \in \Delta_{n-1}$ is generic, then with probability $1$ there will be finitely many critical points of $\ell_u$ restricted to $\mathcal{M}$. If the algebraic set defined by $f$ has codimension $c$ then the dimension of the rowspace of $df$ will be equal to $c$ and the rows will span $N_x \mathcal{M}$ for any generic $x \in \mathcal{M}$ \cite[p.93]{Shafarevich2013BasicAlgebraicGeometry}. With probability $1$, multiplying by the random matrix $A$ will result in a $c \times n$ matrix of full row rank, whose rows also span $N_x \mathcal{M}$. Appending the row $\nabla \ell_u$ and multiplying the resulting matrix by the row vector $\left[ \lambda -1 \right]$ produces $n$ polynomials which evaluate to zero whenever $\nabla \ell_u$ is in the normal space $N_x \mathcal{M}$. Appending the polynomials $f_1,\dots,f_m$ gives a $1 \times (n+m)$ row vector of polynomials evaluating to zero whenever $x \in \mathcal{M}$ and $\nabla \ell_u$ lies in the normal space $N_x \mathcal{M}$. However, this system of equations is overdetermined. Applying Bertini's theorem \cite[Theorem 9.3]{BatesSommeseHauensteinWampler2013NumericallySolvingPolynomialsWithBertini} or \cite[Theorem A.8.7]{SommeseWampler2005NumericalSolutionSystemsPolynomialsTEXT} we can take random linear combinations of these polynomials using $I_{n+c}$ and $B$, and with probability $1$, the isolated solutions of the resulting square system of polynomials will contain all isolated solutions of the original system of equations. The result follows. 
\end{proof}

\begin{remark}
Numerical algebraic geometry \cite{BatesSommeseHauensteinWampler2013NumericallySolvingPolynomialsWithBertini, SommeseWampler2005NumericalSolutionSystemsPolynomialsTEXT} can be used to efficiently find all isolated solutions of a square system of polynomial equations (square means equal number of equations and variables). The system of equations given in Theorem \ref{theorem:formulation-numerical-algebraic-geometry} formulates our problem specifically to take advantage of these tools.
\end{remark}

\begin{remark}
If we are interested in computing the logarithmic Voronoi cell of a specific point $p \in \mathcal{M}$, then we can generate a generic point $u_0 \in \log N_p \mathcal{M}$ by taking a random linear combination of the gradients of $f_1,\dots,f_m$. Using this point $u_0$ we can formulate our system of equations (\ref{equation:numerical-algebraic-geometry-formulation}), one of whose solutions we already know, namely $p$. Using monodromy, we can quickly find many other solutions $p'$ by perturbing our parametrized system of equations through a loop in parameter space. For more details, see \cite{AlexandrHeatonTimme2019ComputingLogarithmicVoronoiCell}. This is especially useful in the case where the ML degree is known a priori, since we can stop our monodromy search after finding ML degree many solutions. This process yields an optimal start system for homotopy continuation, allowing us to almost immediately compute solutions for other data points since we need only follow the ML degree-many solution paths via homotopy continuation. In the next example, we utilize the formulation in Theorem \ref{theorem:formulation-numerical-algebraic-geometry} to numerically compute a logarithmic Voronoi cell in a larger example of statistical interest, \textcolor{black}{a mixture of two binomial distributions, also known as a secant variety.}
\end{remark}

\begin{example}\label{example:segre-mixture-model-numerical-algebraic-geometry}
Bob has three biased coins, one in each pocket, and one in his hand. He flips the coin in his hand, and depending on the outcome, chooses either the coin in his left or right pocket, which he then flips 5 times, recording the total number of heads in the last 5 flips. To estimate the biases of Bob's coins, Alice treats this situation as a $3$-dimensional statistical model $\mathcal{M} \subset \Delta_5 \subset \mathbb{R}^6$. Using implicitization \cite[Section 4.2]{MichalekSturmfels2019InvitationNonlinearAlgebraTEXT}, Alice derives the following algebraic equations describing $\mathcal{M}$:
\begin{equation*}
    f(x) = \left[ \begin{array}{c}
        20x_1x_3x_5-10x_1x_4^2-8x_2^2x_5+4x_2x_3x_4-x_3^3\\
        100x_1x_3x_6-20x_1x_4x_5-40x_2^2x_6+4x_2x_3x_5+2x_2x_4^2-x_3^2x_4\\
        100x_1x_4x_6-40x_1x_5^2-20x_2x_3x_6+4x_2x_4x_5+2x_3^2x_5-x_3x_4^2\\
        20x_2x_4x_6-8x_2x_5^2-10x_3^2x_6+4x_3x_4x_5-x_4^3\\
        x_1+x_2+x_3+x_4+x_5+x_6-1
    \end{array} \right].
\end{equation*}
For a concrete example, consider the point which arises by setting the biases of the coins to $b_1 = \frac{7}{11}, b_2 = \frac{3}{5}, b_3 = \frac{3}{7}$. Explicitly this point $p \in \mathcal{M}$ is
\begin{equation*}
    p = \Big(\frac{518}{9375}, \frac{124}{625}, \frac{192}{625}, \frac{168}{625}, \frac{86}{625}, \frac{307}{9375}\Big).
\end{equation*}
The log-normal space $\log N_p \mathcal{M}$ is $3$-dimensional, becoming a $2$-dimensional polytope when intersected with $\Delta_5 \subset \mathbb{R}^6$. This intersection is the log-normal polytope, in this case, a hexagon. In fact, this hexagon is the (2-dimensional) convex hull of the following six vertices:
\begin{footnotesize}
\begin{align*}
&\left(0, \frac{651}{1625}, 0, \frac{30569}{58500}, \frac{43}{2250}, \frac{3377}{58500}\right)\\
&\left(0, \frac{124}{375}, \frac{88}{375}, \frac{77}{375}, \frac{86}{375}, 0\right)\\
&\left(\frac{8288}{76875}, 0, \frac{3176}{5125}, 0, \frac{1376}{5125}, \frac{307}{76875}\right)\\
&\left(\frac{259}{1875}, 0, \frac{52}{125}, \frac{91}{250}, 0, \frac{307}{3750}\right)\\
&\left(\frac{518}{76875}, \frac{1984}{5125}, 0, \frac{2779}{5125}, 0, \frac{4912}{76875}\right)\\
&\left(\frac{2849}{29250}, \frac{31}{1125}, \frac{8734}{14625}, 0, \frac{903}{3250}, 0\right).
\end{align*}\end{footnotesize}
By choosing an orthonormal basis agreeing with $\log N_p \mathcal{M}$ we can plot this hexagon, though it lives in $\mathbb{R}^6$. Figure \ref{fig:large-example} shows the log-normal polytope and our numerical approximation of the logarithmic Voronoi cell (which is not a polytope) surrounding the point $p$. By rejection sampling, we computed $60000$ points $u_1,u_2,\dots,u_{60000} \in \log N_p \mathcal{M} \cap \Delta_{n-1}$ in the log-normal polytope. By a result in \cite{HostenKhetanSturmfels2005SolvingLikelihoodEquations}, we know that the ML degree of this model is $39$. Using the formulation presented in Theorem \ref{theorem:formulation-numerical-algebraic-geometry}, we successfully computed all $39$ complex critical points for each $\ell_{u_i}, i \in \{1,2,\dots,60000\}$ restricted to $\mathcal{M}$. We easily find each $\Phi(u_i)$ by comparing the $39$ values, choosing the maximum. If $p = \Phi(u_i)$ then $u_i \in \log \text{Vor}_\mathcal{M}(p)$ and we color that point green in Figure \ref{fig:large-example}, while if $p \neq \Phi(u_i)$ we color the point pink. The repeated computations of each set of $39$ critical points were accomplished using the software \texttt{HomotopyContinuation.jl} \cite{BreidingTimme2017HomotopyContinuationJL}, which can efficiently compute the isolated solutions to systems of polynomial equations using homotopy continuation \cite{BatesSommeseHauensteinWampler2013NumericallySolvingPolynomialsWithBertini, SommeseWampler2005NumericalSolutionSystemsPolynomialsTEXT}.  A full description of the \texttt{Julia} code needed to compute this example can be found online at \cite{AlexandrHeatonTimme2019ComputingLogarithmicVoronoiCell}.

\end{example}

\section{All empirical distributions of fixed sample size}\label{section:finite-point-models}

Consider running experiments with sample size $d$ and choosing the model defined by
\begin{equation*}
    \mathcal{M} := \frac{ \mathbb{Z}^n \cap d \cdot \Delta_{n-1} }{d}.
\end{equation*}

Philosophically, $\mathcal{M}$ is the \textit{chaotic universe model}. Adopting this model is to abandon the idea that experiments tell us about some simpler underlying truth, since the experimental data will always lie exactly on the model. In this section we investigate the Euclidean and logarithmic Voronoi cells for $p \in \mathcal{M}_{n,d}$. For convenience we work with the scaled set $d \cdot \Delta_{n-1}$ since all polytopes considered will be combinatorially equivalent to those we could define in $\Delta_{n-1}$. Then we define $\mathcal{M}_{n,d}$ as the $N:= \binom{n+d-1}{d}$ nonnegative integer vectors summing to $d$. Thus $(p_1,p_2,\dots,p_n) = p \in \mathcal{M}_{n,d}$ has all coordinates $p_i \in \mathbb{N}$. These vectors can be used to create a projective toric variety, the $d$th Veronese embedding of $\mathbb{P}^{n-1}$ into $\mathbb{P}^{N-1}$ \cite[Chapter 8]{MichalekSturmfels2019InvitationNonlinearAlgebraTEXT}, but instead we treat them as the model itself. By Proposition \ref{prop:finite-models-logVor-are-polytopes}, the logarithmic Voronoi cells for $p \in \mathcal{M}_{n,d}$ are polytopes. For any $p \in \mathcal{M}_{n,d}$ such that all coordinates $p_i > 1$, we will provide a full characterization of the faces of the corresponding logarithmic root polytopes in Theorem \ref{theorem:log-root-polytope-face-bijection}. Theorem \ref{theorem:log-root-are-dual-log-Voronoi} shows that these logarithmic root polytopes are dual to the logarithmic Voronoi cells. These are the main results of the section. Again using orthogonal projection from $\mathbb{R}^4$, Figure \ref{fig:finite-simplex-tessellation} shows all the logarithmic Voronoi cells for interior points of $\mathcal{M}_{4,9}$ and $\mathcal{M}_{4,10}$.
\begin{figure}[!htb]
    \centering
    \includegraphics[scale=0.5]{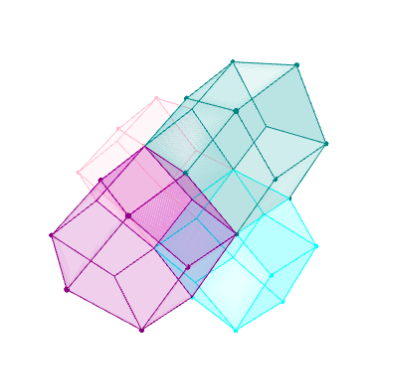}
    \includegraphics[scale=0.5]{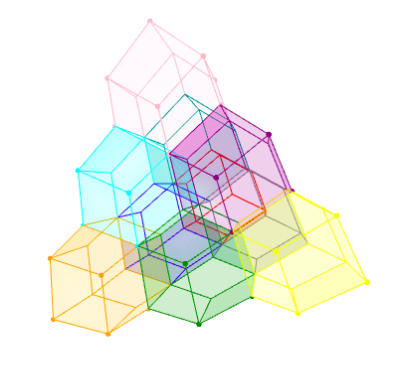}
    \caption{Logarithmic Voronoi cells (rhombic dodecahedra) of interior points for $n=4$, $d=9$ (on the left) and $d=10$ (on the right).}
    \label{fig:finite-simplex-tessellation}
\end{figure}

The Euclidean Voronoi cells for $p \in \mathcal{M}_{n,d}$ are the duals of \textit{root polytopes} of type $A_{n-1}$, i.e. the facets are defined by inequalities whose normal vectors are $\{ e_i - e_j : i \neq j \}$. Root polytope often refers to the convex hull of the origin and the positive roots $\{e_i - e_j : i < j \}$. These were studied in \cite{GelfandGraevPostnikov1997CombinatoricsOfHypergeometricFunctionsAssociatedWithPositiveRoots} in terms of their relationship to certain hypergeometric functions. However, we define root polytopes to be the convex hull of \textit{all} roots, as studied in \cite{Cho1999PolytopesOfRootsOfTypeAn}. We also note that these polytopes are \textit{Young orbit polytopes} for the partition $(n-1,1)$ and find application in combinatorial optimization \cite{Onn1993GeometryComplexityCombinatoricsPermutationPolytopes}.

Denote the $(n-1)$-dimensional root polytope by $P_n \subset \mathbb{R}^n$, so that the Euclidean Voronoi cells of $p \in \mathcal{M}_{n,d}$ are the dual $P_n^*$. The volume of $P_n$ is equal to $\frac{n}{(n-1)!} C_{n-1}$, where $C_{n-1}$ is a Catalan number. Every nontrivial face of $P_n$ is a Cartesian product of two simplices, and corresponds to a pair of nonempty, disjoint subsets $I,J \subset [n]$. Every $m$-dimensional face of $P_n$ is the convex hull of the vectors $\{e_i - e_j : i \in I, j \in J\}$ with $|I|+|J| = m+2$, so there is a bijection between nontrivial faces and the set of ordered partitions of subsets of $[n]$ with two blocks \cite[Theorem 1]{Cho1999PolytopesOfRootsOfTypeAn}. This result is related to the face description of $\Pi_{n-1}$, the permutahedron, since $P_n$ is a \textit{generalized permutahedron} and can be obtained by collapsing certain faces of $\Pi_{n-1}$. 

In the logarithmic setting, analogous polytopes $\log P_n(p)$ exist, playing the same role as the root polytopes in the Euclidean case. However, their details are more complicated. The correct modifications motivate the following definition.

\begin{definition}\label{definition:logarithmic-root-polytopes}
The \textit{logarithmic root polytope} for $p \in \mathcal{M}_{n,d}$ is defined as the convex hull of the $2 \binom{n}{2}$ vertices $v_{ij}$ for $i \neq j \in [n]$ given by the formulas
\begin{equation*}
    v_{ij} := \frac{1}{b_j p_j - a_i p_i} \left[ a_i e_i - b_j e_j - \frac{(a_i - b_j)}{n}\mathbf{1} \right]
\end{equation*}
where
\begin{equation*}
    \begin{array}{ccc}
        a_i := \log(\frac{p_i + 1}{p_i}) & & b_j := \log(\frac{p_j}{p_j - 1})
    \end{array}
\end{equation*}
and where $\mathbf{1}:= \sum_{k \in [n]} e_k$. Note that $a_i, b_j >0$ are always positive real numbers and all vectors $v_{ij}$ are orthogonal to $\mathbf{1}$. We denote the polytope by $\log P_n(p)$.
\end{definition}

The statement and proof of the following Theorem \ref{theorem:log-root-polytope-face-bijection} was inspired by and closely follows \cite[Theorem 1]{Cho1999PolytopesOfRootsOfTypeAn}. However, significant details needed to be modified. For example, the linear functional
\begin{equation*}
    g = (1,0,-1,1,-1,0,-1)
\end{equation*}
is replaced by
\begin{equation*}\scriptsize
    \left[ \begin{array}{c}
        -a_{1} a_{4} b_{3} b_{5} p_{1} - a_{1} a_{4} b_{3} b_{7} p_{1} - a_{1} a_{4} b_{5} b_{7} p_{1} - a_{1} b_{3} b_{5} b_{7} p_{1} + a_{4} b_{3} b_{5} b_{7} p_{3} + a_{4} b_{3} b_{5} b_{7} p_{4} + a_{4} b_{3} b_{5} b_{7} p_{5} + a_{4} b_{3} b_{5} b_{7} p_{7} \\ 0 \\ a_{1} a_{4} b_{5} b_{7} p_{1} - a_{1} a_{4} b_{3} b_{5} p_{3} - a_{1} a_{4} b_{3} b_{7} p_{3} - a_{1} b_{3} b_{5} b_{7} p_{3} - a_{4} b_{3} b_{5} b_{7} p_{3} + a_{1} a_{4} b_{5} b_{7} p_{4} + a_{1} a_{4} b_{5} b_{7} p_{5} + a_{1} a_{4} b_{5} b_{7} p_{7} \\ a_{1} b_{3} b_{5} b_{7} p_{1} + a_{1} b_{3} b_{5} b_{7} p_{3} - a_{1} a_{4} b_{3} b_{5} p_{4} - a_{1} a_{4} b_{3} b_{7} p_{4} - a_{1} a_{4} b_{5} b_{7} p_{4} - a_{4} b_{3} b_{5} b_{7} p_{4} + a_{1} b_{3} b_{5} b_{7} p_{5} + a_{1} b_{3} b_{5} b_{7} p_{7} \\ a_{1} a_{4} b_{3} b_{7} p_{1} + a_{1} a_{4} b_{3} b_{7} p_{3} + a_{1} a_{4} b_{3} b_{7} p_{4} - a_{1} a_{4} b_{3} b_{5} p_{5} - a_{1} a_{4} b_{5} b_{7} p_{5} - a_{1} b_{3} b_{5} b_{7} p_{5} - a_{4} b_{3} b_{5} b_{7} p_{5} + a_{1} a_{4} b_{3} b_{7} p_{7} \\ 0 \\ a_{1} a_{4} b_{3} b_{5} p_{1} + a_{1} a_{4} b_{3} b_{5} p_{3} + a_{1} a_{4} b_{3} b_{5} p_{4} + a_{1} a_{4} b_{3} b_{5} p_{5} - a_{1} a_{4} b_{3} b_{7} p_{7} - a_{1} a_{4} b_{5} b_{7} p_{7} - a_{1} b_{3} b_{5} b_{7} p_{7} - a_{4} b_{3} b_{5} b_{7} p_{7} 
    \end{array} \right].
\end{equation*}
This linear functional plays the same role for the logarithmic root polytope of $(p_1,p_2,\dots,p_7) \in \mathcal{M}_{7,d}$
as $g$ plays for the usual root polytope in the proof of \cite[Theorem 1]{Cho1999PolytopesOfRootsOfTypeAn}.

\begin{theorem}\label{theorem:log-root-polytope-face-bijection}
For $m \in \{0,1,\dots,n-2\}$, every $m$-dimensional face of the logarithmic root polytope for $p \in \mathcal{M}_{n,d}$ is given by the convex hull of the vertices $v_{ij}$ for $i \in I, j \in J$, where $I,J$ are disjoint nonempty subsets of $[n]$ such that $|I| + |J| = m+2$. Thus there is a bijection between nontrivial faces and the set of ordered partitions of subsets of $[n]$ with two blocks, where the dimension of the face corresponding to $(I,J)$ is $|I|+|J|-2$.
\end{theorem}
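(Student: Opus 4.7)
The plan is to follow the structure of Cho's proof for the standard root polytope $P_n$: for every ordered pair $(I,J)$ of disjoint nonempty subsets of $[n]$ with $|I| + |J| \leq n$, I will exhibit a linear functional $\phi_{I,J} \in \mathbf{1}^\perp$ whose maximum over $\log P_n(p)$ is attained exactly on $\{v_{ij} : i \in I, j \in J\}$. Since every face of a polytope is the argmax of a supporting linear functional, this construction, together with a dimension calculation and the fact that every functional's argmax set must have the form $I \times J$, will establish the claimed face bijection.

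Since each $v_{ij}$ lies in $\mathbf{1}^\perp$, I restrict to $\phi \in \mathbf{1}^\perp$ and compute
\begin{equation*}
\langle \phi, v_{ij} \rangle = \frac{\phi_i a_i - \phi_j b_j}{b_j p_j - a_i p_i},
\end{equation*}
noting that the denominator is positive under the assumption $p_i > 1$, via the strict concavity estimates $\log(1 + 1/p_i) < 1/p_i$ and $\log(p_j/(p_j-1)) > 1/p_j$, which together give $a_i p_i < 1 < b_j p_j$. Requiring $\langle \phi, v_{ij} \rangle$ to take a common value $K$ for all $(i,j) \in I \times J$ is equivalent to requiring $a_i(\phi_i + K p_i) = b_j(\phi_j + K p_j) = M$ for some constant $M$; this solves to $\phi_i = M/a_i - K p_i$ on $I$ and $\phi_j = M/b_j - K p_j$ on $J$. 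The constants $K$, $M$, and the entries $\phi_k$ for $k \notin I \cup J$ are free, subject only to $\sum_k \phi_k = 0$, and I would fix them so that the strict inequality $a_{i'}(\phi_{i'} + K p_{i'}) < b_{j'}(\phi_{j'} + K p_{j'})$ holds for every pair $(i',j')$ with $i' \neq j'$ and $(i',j') \notin I \times J$. A case analysis on the memberships of $i'$ and $j'$ in $I$, $J$, or the complement then produces the required inequalities.

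For the dimension claim, affine independence of $\{v_{ij} : i \in I, j \in J\}$ follows because, up to the positive scaling factor, each $v_{ij}$ is supported on coordinates $i$ and $j$ together with the common $\mathbf{1}$-shift, so the affine hull has dimension $|I| + |J| - 2$ inside $\mathbf{1}^\perp$. For the converse direction---that every face has this form---I take an arbitrary supporting functional $\phi$ with maximum value $K^*$; the argmax set $S \subseteq \{v_{ij}\}$ consists of those $(i,j)$ satisfying $a_i(\phi_i + K^* p_i) = b_j(\phi_j + K^* p_j)$, so defining $I := \{i : \exists\, j,\ (i,j) \in S\}$ and $J := \{j : \exists\, i,\ (i,j) \in S\}$ gives $S = I \times J$, and $I \cap J = \emptyset$ follows from $a_k \neq b_k$ (another strict concavity consequence). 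The main obstacle will be the construction in the second paragraph: choosing the intermediate values $\phi_k$ so all strict inequalities hold requires care, as the displayed $n = 7$ functional shows these values end up as nontrivial rational expressions in the $a_i$, $b_j$, $p_k$, far more intricate than Cho's constant ``$1/2$'' trick in the Euclidean setting, and the bookkeeping of sign patterns in the case analysis is where the real technical work lies.
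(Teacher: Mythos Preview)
Your strategy is the paper's strategy: follow Cho by producing, for each ordered pair $(I,J)$, a supporting linear functional whose argmax on the vertex set is exactly $\{v_{ij}:i\in I,\,j\in J\}$, and conversely show every supporting functional singles out such a set. The paper, however, writes down explicit closed-form coordinates for $g_{IJ}$ (with $g_\ell=0$ for $\ell\notin I\cup J$) and then reports that an ``admittedly lengthy calculation'' verifies the required strict inequalities, giving the common maximum value $\sum_{i\in I}a^{I\setminus i}b^J+\sum_{j\in J}a^I b^{J\setminus j}$. Your reduction $\langle\phi,v_{ij}\rangle=K\iff a_i(\phi_i+Kp_i)=b_j(\phi_j+Kp_j)$ is a genuine simplification: writing $\alpha_k=a_k(\phi_k+Kp_k)$ and $\beta_k=b_k(\phi_k+Kp_k)$, one needs $\alpha_i=\beta_j=M$ on $I\times J$ and $\alpha_{i'}<\beta_{j'}$ off it, and since $a_kp_k<1<b_kp_k$ (hence $a_k<b_k$), choosing $M>0$ and any $\phi_k+Kp_k\in(M/b_k,\,M/a_k)$ for $k\notin I\cup J$ makes every strict inequality immediate. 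This is more transparent than the paper's formulas and in fact explains why those formulas work. For the converse the paper partitions $[n]$ by the sign of $f_\ell$ and then selects $I',J'$ by argmax/argmin of ratios $f_\ell/g_\ell$; your route via the equality $\alpha_i=\beta_j$ is again cleaner, though you should note that proving $S=I\times J$ uses the chain $\alpha_{i_1}\le\beta_{j_2}=\alpha_{i_2}\le\beta_{j_1}=\alpha_{i_1}$, and the disjointness $I\cap J=\emptyset$ via $a_k\neq b_k$ requires the common value $M^*\neq 0$ (else $\phi_k+K^*p_k=0$ for all $k$ and the face is the whole polytope).

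There is one real slip. You assert ``affine independence of $\{v_{ij}:i\in I,\,j\in J\}$'', but that set has $|I|\cdot|J|$ elements and is \emph{not} affinely independent once $|I|,|J|\ge 2$. Your support argument gives only the upper bound $\dim\le |I|+|J|-2$; the matching lower bound still needs $|I|+|J|-1$ affinely independent vertices. The paper supplies this explicitly: it takes $X=\{v_{i_1 j_\ell}:1\le\ell\le|J|\}\cup\{v_{i_\ell j_1}:2\le\ell\le|I|\}$ and writes every remaining $v_{ij}$ as an affine combination of three elements of $X$ with coefficients $(b_{j_1}p_{j_1}-a_ip_i)/(b_jp_j-a_ip_i)$, $-(b_{j_1}p_{j_1}-a_{i_1}p_{i_1})/(b_jp_j-a_ip_i)$, $(b_jp_j-a_{i_1}p_{i_1})/(b_jp_j-a_ip_i)$. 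You will need something equivalent.
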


\begin{proof}
Each face of a polytope can be described as the subset of the polytope maximizing a linear functional. Recall that we have fixed some $p \in \mathcal{M}_{n,d}$ with all $p_k > 1$ and that
\begin{equation*}
    \begin{array}{ccc}
        a_i := \log(\frac{p_i + 1}{p_i}) & \text{ and } & b_j := \log(\frac{p_j}{p_j - 1}).
    \end{array}
\end{equation*}
In our formula (\ref{equation:Yulia-rewrite-coordinate-formulas}) we use a shorthand for writing square-free monomials in the $a_1,a_2,\dots,a_n$ and the $b_1,b_2,\dots,b_n$. For example if $I=\{1,2,4\}$ then $a^I = a_1 a_2 a_4$, while if $J = \{3,5\}$ then $b^J = b_3 b_5$. For a pair of disjoint nonempty subsets $I,J$ of $[n]$ we define the linear functional $g_{IJ} = (g_1,g_2,\dots,g_n) \in (\mathbb{R}^n)^*$ by the formulas
\begin{equation}\label{equation:Yulia-rewrite-coordinate-formulas}
    \begin{array}{cc}
        \text{If } \ell \in I, & g_\ell = \sum_{i \in I \setminus \ell} a^{I \setminus \{\ell,i\}} b^J (a_i p_i - a_\ell p_\ell ) + \sum_{j \in J} a^{I \setminus \ell} b^{J \setminus j} (b_j p_j - a_\ell p_\ell)\\
        \text{If } \ell \in J, & g_\ell = \sum_{i \in I} a^{I \setminus i} b^{J \setminus \ell}(a_i p_i - b_\ell p_\ell ) + \sum_{j \in J \setminus \ell} a^I b^{J \setminus \{\ell,j\}} (b_j p_j - b_\ell p_\ell) \\
        \text{Else, } & g_\ell = 0.
    \end{array}
\end{equation}
Then the convex hull of the vectors $\{ v_{ij} : i \in I, j \in J\}$ is the face maximizing $g_{IJ}$. To see this, first note that $g_{IJ} \cdot \mathbf{1} = 0$. Because of this fact we can ignore the component of $v_{ij}$ in the $\mathbf{1}$ direction. Recall that
\begin{equation*}
    v_{ij} := \frac{1}{b_j p_j - a_i p_i} \left[ a_i e_i - b_j e_j - \frac{(a_i - b_j)}{n}\mathbf{1} \right],
\end{equation*}
so that to evaluate $g_{IJ}$ on $v_{ij}$ it is enough to evaluate on
\begin{equation*}
    \frac{1}{b_j p_j - a_i p_i} \left[ a_i e_i - b_j e_j \right].
\end{equation*}
Recalling that the $a_i$ and $b_j$ are always positive and that the $p_k > 1$, it can be seen that $g_{IJ}$ takes equal value on every vertex $v_{rs}$ for $r \in I, s \in J$, and strictly less on every other vertex.  We omit the details of the admittedly lengthy calculation, but note that the common maximum value attained on all vertices $v_{rs}$ for $r \in I, s \in J$, is equal to
\begin{equation*}
    \sum_{i \in I} a^{I \setminus i} b^J + \sum_{j \in J} a^I b^{J \setminus j}.
\end{equation*} 
Conversely, given an arbitrary linear functional $f = (f_1,f_2,\dots,f_n)$ determining a nontrivial face $F$, collect the indices where its components are nonnegative in a set $I$ and the indices where its components are negative in a set $J$. Then $(I,J)$ is a partition of $[n]$ and we refer to the same formulas (\ref{equation:Yulia-rewrite-coordinate-formulas}) as above in order to define the sets $(I',J')$ as follows. If $I \neq \emptyset $ and $J \neq \emptyset$ then let
\begin{align*}
    I' &:= \{ i : f_i/g_i = \text{max}( f_\ell / g_\ell : \ell \in I )\}\\
    J' &:= \{ j : f_j/g_j = \text{max}( f_\ell / g_\ell : \ell \in J )\}.
\end{align*}
If $I = \emptyset$ then let
\begin{align*}
    I' &:= \{ i : f_i/g_i = \text{min}( f_\ell / g_\ell : \ell \in J )\}\\
    J' &:= \{ j : f_j/g_j = \text{max}( f_\ell / g_\ell : \ell \in J )\},
\end{align*}
while if $J = \emptyset$ then let
\begin{align*}
    I' &:= \{ i : f_i/g_i = \text{max}( f_\ell / g_\ell : \ell \in I )\}\\
    J' &:= \{ j : f_j/g_j = \text{min}( f_\ell / g_\ell : \ell \in I )\}.
\end{align*}
Note that the face $F$ is the convex hull of the vectors $\{ v_{ij} : i \in I', j \in J' \}$ and hence $(I',J')$ are determined independently of the choice of linear functional which maximizes the~given~face.

Now we show that the dimension of the face corresponding to disjoint nonempty sets $I,J$ of $[n]$ is $|I|+|J|-2$. Let $I = \{i_1,\dots,i_{|I|}\}$ and $J = \{ j_1, \dots, j_{|J|}\}$. Then
\begin{equation*}
    X = \{ v_{i_1,j_\ell} : \ell =1,\dots,|J| \} \cup \{ v_{i_\ell,j_1} : \ell =2,\dots,|I| \}
\end{equation*}
is a maximal linearly independent subset of $|I|+|J|-1$ of the vectors $v_{ij}, i \in I, j \in J$. In addition, for any $i \in I, j \in J$ either $v_{ij} \in X$ or we can write it as an affine combination (coefficients sum to 1) of vectors in X, namely
\begin{equation*}
    v_{i,j} = \left(\frac{b_{j_1}p_{j_1} - a_i p_i}{b_j p_j - a_i p_i}\right)v_{i,j_1} - \left(\frac{b_{j_1}p_{j_1} - a_{i_1}p_{i_1}}{b_j p_j - a_i p_i}\right)v_{i_1,j_1} + \left(\frac{b_j p_j - a_{i_1} p_{i_1}}{b_j p_j - a_i p_i}\right)v_{i_1,j}.
\end{equation*}
Hence, $X$ is an affine basis of the face corresponding to $I,J$, whose dimension is $|X|-1$, which is $|I|+|J|-2$ as desired. This completes the proof.
\end{proof}

\begin{example}
Let $n=6$, $I = \{ 1,4\}$, $J = \{2,3,5\}$ and $p = (2,15,3,5,9,6)$. We implemented the formulas (\ref{equation:Yulia-rewrite-coordinate-formulas}) in floating point arithmetic (due to the logarithms) and obtain (shown to only three digits)
\begin{equation*}
    g_{IJ} = (0.00415, -0.00200, -0.00398, 0.00474, -0.00291, -0.000).
\end{equation*}
We can evaluate this linear functional on the vertices $v_{ij}$ for $i \neq j$ where $i,j \in [6]$ and obtain the following values, which are as expected.
\begin{equation*}\scriptsize
    \begin{array}{rl}
        0.008135843945 & v(1, 2) = (1.56, -0.559, -0.251, -0.251, -0.251, -0.251) \\ 0.008135843948 & v(1, 3) = (1.00, 0.000, -1.00, 0.000, 0.000, 0.000) \\ 0.002052114856 & v(1, 4) = (1.23, -0.0997, -0.0997, -0.832, -0.0997, -0.0997) \\ 0.008135843948 & v(1, 5) = (1.43, -0.192, -0.192, -0.192, -0.665, -0.192) \\ 0.005950315119 & v(1, 6) = (1.30, -0.132, -0.132, -0.132, -0.132, -0.776) \\ -0.007192386292 & v(2, 1) = (-1.41, 0.405, 0.250, 0.250, 0.250, 0.250) \\ 0.005982647332 & v(2, 3) = (0.229, 0.488, -1.40, 0.229, 0.229, 0.229) \\ -0.008044880930 & v(2, 4) = (0.179, 0.615, 0.179, -1.33, 0.179, 0.179) \\ 0.002322216671 & v(2, 5) = (0.0963, 0.796, 0.0963, 0.0963, -1.18, 0.0963) \\ -0.001027169161 & v(2, 6) = (0.156, 0.669, 0.156, 0.156, 0.156, -1.29) \\ -0.007691322875 & v(3, 1) = (-1.20, 0.129, 0.679, 0.129, 0.129, 0.129) \\ -0.005863205380 & v(3, 2) = (-0.213, -0.615, 1.47, -0.213, -0.213, -0.213) \\ -0.008723741580 & v(3, 4) = (-0.0426, -0.0426, 1.10, -0.926, -0.0426, -0.0426) \\ -0.004075725208 & v(3, 5) = (-0.144, -0.144, 1.32, -0.144, -0.742, -0.144) \\ -0.004962538041 & v(3, 6) = (-0.0762, -0.0762, 1.17, -0.0762, -0.0762, -0.867) \\ -0.004242519680 & v(4, 1) = (-1.28, 0.179, 0.179, 0.563, 0.179, 0.179) \\ 0.008135843941 & v(4, 2) = (-0.153, -0.713, -0.153, 1.33, -0.153, -0.153) \\ 0.008135843947 & v(4, 3) = (0.122, 0.122, -1.21, 0.720, 0.122, 0.122) \\ 0.008135843947 & v(4, 5) = (-0.0723, -0.0723, -0.0723, 1.15, -0.865, -0.0723) \\ 0.004743470845 & v(4, 6) = (0.000, 0.000, 0.000, 1.00, 0.000, -1.00) \\ -0.007271750954 & v(5, 1) = (-1.36, 0.224, 0.224, 0.224, 0.464, 0.224) \\ -0.001944541355 & v(5, 2) = (-0.0700, -0.866, -0.0700, -0.0700, 1.15, -0.0700) \\ 0.004878535171 & v(5, 3) = (0.186, 0.186, -1.32, 0.186, 0.579, 0.186) \\ -0.008151512920 & v(5, 4) = (0.117, 0.117, 0.117, -1.21, 0.745, 0.117) \\ -0.002105123850 & v(5, 6) = (0.0880, 0.0880, 0.0880, 0.0880, 0.811, -1.16) \\ -0.006239195419 & v(6, 1) = (-1.31, 0.195, 0.195, 0.195, 0.195, 0.528) \\ 0.001256424608 & v(6, 2) = (-0.129, -0.756, -0.129, -0.129, -0.129, 1.27) \\ 0.005540018448 & v(6, 3) = (0.144, 0.144, -1.25, 0.144, 0.144, 0.672) \\ -0.005547164875 & v(6, 4) = (0.0602, 0.0602, 0.0602, -1.11, 0.0602, 0.867) \\ 0.002536892813 & v(6, 5) = (-0.0448, -0.0448, -0.0448, -0.0448, -0.915, 1.09)
    \end{array}
\end{equation*}
\end{example}

\begin{theorem}\label{proposition:logarithmic-root-polytopes}\label{theorem:log-root-are-dual-log-Voronoi}
The logarithmic Voronoi cells for $p \in \mathcal{M}_{n,d}$ with all $p_i > 1$ are the dual polytopes $(\log P_n(p))^*$ of the logarithmic root polytopes $\log P_n(p)$.
\end{theorem}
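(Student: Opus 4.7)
The plan is to show that, after translating so that $p$ sits at the origin of the hyperplane $H := \{w \in \mathbb{R}^n : \sum_k w_k = 0\}$, the cell $\log\text{Vor}_{\mathcal{M}_{n,d}}(p) - p$ coincides with the polar dual of $\log P_n(p) \subset H$. By Proposition \ref{prop:finite-models-logVor-are-polytopes}, the Voronoi cell is already the intersection of half-spaces $H_q := \{u : \sum_k u_k \log(p_k/q_k) \ge 0\}$ over $q \in \mathcal{M}_{n,d} \setminus \{p\}$. I would proceed in three steps: compute the bounding hyperplanes arising from nearest-neighbor moves, match them to the vertices $v_{ij}$ of $\log P_n(p)$, and show that every other $H_q$ is redundant.

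First, for each nearest-neighbor $q = p - e_i + e_j$ (which lies in $\mathcal{M}_{n,d}$ since $p_i \ge 2$), the inequality $\ell_u(p) \ge \ell_u(q)$ collapses to $u_i b_i \ge u_j a_j$, with $a_j$ and $b_i$ as in Definition \ref{definition:logarithmic-root-polytopes}. Setting $w = u - p$, the bounding hyperplane becomes $(a_j e_j - b_i e_i) \cdot w = b_i p_i - a_j p_j$; the right-hand side is strictly positive because concavity of $\log$ yields $a_j p_j < 1 < b_i p_i$. Dividing through and using that $\mathbf{1}$ annihilates $H$, the half-space reduces to $\{w \in H : v_{ji} \cdot w \le 1\}$.

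The main obstacle is the redundancy of the remaining $H_q$, which I expect to settle by a telescoping argument. For arbitrary $q \in \mathcal{M}_{n,d}$, partition the indices into $I := \{k : p_k > q_k\}$ and $J := \{k : p_k < q_k\}$ (indices with $q_k = 0$ trivially give $\ell_u(q) = -\infty$ and may be ignored). Because $m \mapsto \log(m/(m-1))$ is decreasing, the identity
\begin{equation*}
\log(p_k/q_k) \;=\; \sum_{m = q_k + 1}^{p_k} \log\frac{m}{m-1}
\end{equation*}
yields $\log(p_k/q_k) \ge (p_k - q_k)\, b_k$ for $k \in I$, and symmetrically $\log(p_k/q_k) \ge (p_k - q_k)\, a_k$ for $k \in J$. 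Weighting by $u_k$, summing, and setting $S := \sum_{I}(p_k - q_k) = \sum_{J}(q_k - p_k)$ produces
\begin{equation*}
\sum_k u_k \log(p_k/q_k) \;\ge\; S\Bigl(\min_{i \in I} u_i b_i - \max_{j \in J} u_j a_j\Bigr) \;\ge\; 0,
\end{equation*}
where the final inequality uses the nearest-neighbor conditions $u_i b_i \ge u_j a_j$.

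Combining the three steps, the translated Voronoi cell equals $\{w \in H : v_{ij} \cdot w \le 1 \text{ for all } i \ne j\}$, which is the polar dual of $\log P_n(p)$ in $H$; note that the origin sits in the interior of $\log P_n(p)$ since for any nonzero $w \in H$, picking $i$ with $w_i > 0$ and $j$ with $w_j < 0$ makes $v_{ij} \cdot w > 0$. That the $v_{ij}$ are precisely the vertices of $\log P_n(p)$, with none redundant, is the $m = 0$ case of Theorem \ref{theorem:log-root-polytope-face-bijection}. The crux of the argument is the telescoping bound in step three, which is itself a discrete manifestation of the concavity of $\log$.
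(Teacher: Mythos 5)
Your proposal is correct, and its overall architecture is the same as the paper's: restrict attention to the inequalities coming from the one-root moves $q=p-e_i+e_j$ (legal since $p_i\geq 2$), translate so that $p$ becomes the origin of the hyperplane $\sum_k w_k=0$, normalize each inequality by its (positive) constant term $b_ip_i-a_jp_j$ to put the system in the form $v\cdot w\leq 1$ with $v$ ranging over the points of Definition \ref{definition:logarithmic-root-polytopes}, and then invoke polarity. The genuine difference lies in how you prove that all other inequalities $H_q(u)\geq 0$ are redundant, which is the content of the paper's Lemma \ref{lemma:logarithmic-Voronoi-sufficient-inequalities}: there the argument is a case analysis over points of the form $q=p+\delta+\delta'$ with $\delta,\delta'\in R$, i.e.\ only two root-steps from $p$, comparing each such inequality with doubled one-step inequalities (with the extension to arbitrary $q$ left to an implicit iteration), whereas your telescoping bounds $\log(p_k/q_k)\geq (p_k-q_k)b_k$ for $p_k>q_k$ and $\log(p_k/q_k)\geq (p_k-q_k)a_k$ for $p_k<q_k$, combined with $\sum_{I}(p_k-q_k)=\sum_{J}(q_k-p_k)=S$ and the pairwise conditions $u_ib_i\geq u_ja_j$, dispose of every $q\in\mathcal{M}_{n,d}$ in one stroke. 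This is cleaner and, on this point, more complete than the paper's written argument; it is the same discrete concavity of $\log$ that drives the paper's case checks, packaged globally. One small step you (like the paper) leave implicit: to multiply the telescoped inequalities by $u_k$, and to conclude that the polar dual actually sits inside the simplex, you need $u_k\geq 0$ on the set cut out by the root inequalities together with $\sum_k u_k=d$; this is immediate — if some $u_j<0$, then $u_ia_i\leq u_jb_j<0$ for every $i\neq j$, forcing all coordinates negative and contradicting $\sum_k u_k=d>0$ — but it deserves a sentence, since both directions of the set equality use it.
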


\begin{proof}
Given a point $p \in \mathcal{M}_{n,d}$, the logarithmic Voronoi cell can be defined as the intersection of $d \cdot \Delta_{n-1}$ with all the halfspaces $H_q(u) \geq 0$ for all points $q \in \mathcal{M}_{n,d}$ with $q \neq p$, where
\begin{equation*}
    H_q(u) := \sum_{i \in [n]} u_i \log\left( \frac{p_i}{q_i} \right).
\end{equation*}
We say that this system of inequalities is \textit{sufficient} to define the logarithmic Voronoi cell. However, not all of these inequalities are necessary. Lemma \ref{lemma:logarithmic-Voronoi-sufficient-inequalities} shows that a certain set of $2\binom{n}{2}$ inequalities is sufficient for all $n \in \mathbb{Z}_{\geq 2}$. These are the inequalities $H_q(u) \geq 0$ for $q = p + e_i - e_j$ for $i \neq j$. We avoid logarithms of zero since $p_k > 1$ and we are away from the simplex boundary. In other words, we get one inequality from every point $q$ reachable from $p$ by moving along a root of type $A_{n-1}$.

These $H_q(u) \geq 0$ inequalities are linear, with constant term zero. However, projecting the normal vectors of these hyperplanes along the all ones vector $\mathbf{1}$ and viewing $p$ as the origin of a new coordinate system, we obtain inequalities with nonzero constant terms. These inequalities describe the same logarithmic Voronoi polytope on the hyperplane $\sum_k u_k = d$. Dividing each inequality by the constant terms we obtain a system of inequalities which is of the form $Au \leq \mathbf{1}$, following the notation of \cite{Ziegler1995LecturesOnPolytopesTEXT}, where the rows of $A$ are exactly the vectors $v_{ij}$. By \cite[Theorem 2.11]{Ziegler1995LecturesOnPolytopesTEXT}, the dual polytope is given by the convex hull of these $v_{ij}$.
\end{proof}

\begin{lemma}\label{lemma:logarithmic-Voronoi-sufficient-inequalities}
Let $p \in \mathcal{M}_{n,d}$ with every entry $p_i > 1$. A sufficient system of inequalities defining the logarithmic Voronoi cell is given by the $2 \binom{n}{2}$ halfspaces $u \in \mathbb{R}^n$ such that $H_\delta(u) \geq 0$ for $\delta \in R := \left\{ e_i - e_j : i \neq j, \, \, i,j \in [n] \right\}$ and the affine plane $\sum u_i = d$, where
\begin{equation*}
    H_\delta(u) := \sum_{i \in [n]} u_i \log \left( \frac{p_i}{p_i + \delta_i} \right).
\end{equation*}
\end{lemma}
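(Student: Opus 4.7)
The plan is to show that for every $q \in \mathcal{M}_{n,d}$ with $q \neq p$, the inequality $\ell_u(p) \geq \ell_u(q)$ (equivalently $H_q(u) \geq 0$) already follows from the $2\binom{n}{2}$ root inequalities together with $u \in d\cdot\Delta_{n-1}$. If $q_\ell = 0$ for some $\ell$, then $\ell_u(q) = -\infty$ since $u_\ell > 0$, so the inequality is automatic; hence it suffices to treat $q$ whose entries are positive integers summing to $d$.

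For such a $q \neq p$, I would set $I^+ = \{\ell : p_\ell > q_\ell\}$ and $I^- = \{\ell : p_\ell < q_\ell\}$, both nonempty, and build a path $p = p^{(0)}, p^{(1)}, \dots, p^{(N)} = q$ of length $N = \tfrac{1}{2}\|p - q\|_1$ in which each step has the form $p^{(k+1)} = p^{(k)} + e_{i_k} - e_{j_k}$ with $i_k \in I^-$ chosen so that $p^{(k)}_{i_k} < q_{i_k}$ and $j_k \in I^+$ chosen so that $p^{(k)}_{j_k} > q_{j_k}$. Such $i_k, j_k$ exist whenever $p^{(k)} \neq q$ because $\sum_\ell (p^{(k)}_\ell - q_\ell) = 0$. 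A short induction then shows $p^{(k)}_\ell$ stays sandwiched between $p_\ell$ and $q_\ell$ for every $\ell$; in particular $p^{(k)}_{i_k} \geq p_{i_k}$ and $p^{(k)}_{j_k} \leq p_{j_k}$, and all entries remain $\geq 1$ (since $q_\ell \geq 1$ and I only decrement indices where $p^{(k)}_\ell > q_\ell \geq 1$).

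The key calculation is
\begin{equation*}
\ell_u(p^{(k+1)}) - \ell_u(p^{(k)}) = u_{i_k}\log\!\left(1 + \tfrac{1}{p^{(k)}_{i_k}}\right) - u_{j_k}\log\!\left(1 + \tfrac{1}{p^{(k)}_{j_k}-1}\right).
\end{equation*}
Since $x \mapsto \log(1 + 1/x)$ and $x \mapsto \log(1 + 1/(x-1))$ are both decreasing, the sandwich property bounds the right-hand side above by $u_{i_k}\, a_{i_k} - u_{j_k}\, b_{j_k}$, which by the root hypothesis applied to $\delta = e_{i_k} - e_{j_k}$ (valid since $I^+$ and $I^-$ are disjoint) equals $-H_{e_{i_k} - e_{j_k}}(u) \leq 0$. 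Telescoping over $k$ yields $\ell_u(q) \leq \ell_u(p)$, i.e., $H_q(u) \geq 0$.

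The heart of the argument is this monotonicity trick: by keeping every intermediate coordinate sandwiched between $p_\ell$ and $q_\ell$, the local marginal log-likelihood increments along the path are dominated by the corresponding root-direction increments evaluated at $p$ itself, which collapses the full family of $H_q$ constraints onto the $2\binom{n}{2}$ root ones. I expect the most delicate point is simply verifying that such a sandwiched path exists while staying in strictly positive integer vectors; this follows from the positivity assumption on $q$ and from $\|p^{(k)} - q\|_1$ decreasing by $2$ at each step.
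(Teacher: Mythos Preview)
Your argument is correct and is both cleaner and more complete than the paper's own proof. The paper proceeds by a case analysis restricted to points of the form $q=p+\delta+\delta'$ with $\delta,\delta'\in R$, verifying in each case an elementary inequality such as $(p_j+1)^2\ge p_j(p_j+2)$ or $(p_k-1)^2\ge p_k(p_k-2)$; as written it never explicitly treats $q$ more than two root-steps from $p$, so the passage to general $q$ is left implicit. Your monotone-path construction handles every $q\in\mathcal{M}_{n,d}$ at once: by keeping each coordinate of $p^{(k)}$ sandwiched between $p_\ell$ and $q_\ell$ and using that $x\mapsto\log(1+1/x)$ is decreasing, every step's log-likelihood increment is dominated by a single root inequality evaluated at $p$, and the telescoping sum finishes the job. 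This makes transparent that the only analytic input is the concavity of $\log$, which in the paper's approach is hidden behind several ad hoc quadratic inequalities. One small point worth stating explicitly: the bounds $u_{i_k}\log(1+1/p^{(k)}_{i_k})\le u_{i_k}a_{i_k}$ and $-u_{j_k}\log(1+1/(p^{(k)}_{j_k}-1))\le -u_{j_k}b_{j_k}$ use $u_{i_k},u_{j_k}>0$, which you indeed have from $u\in d\cdot\Delta_{n-1}$; this is consistent with how the lemma is applied in the paper, where the simplex constraint is taken as part of the ambient description.
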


\begin{proof}
We prove that the $2\binom{n}{2}$ inequalities $H_\delta(u)\geq 0$ for $\delta \in R$ are sufficient. Fix $p\in\M$ with all $p_i > 1$. Let $u\in \RR^n$ such that $H_\delta(u)\geq 0$ for all $\delta\in R$. Fix some $q=p+\delta+\delta'$ where $\delta,\delta'\in R$, and assume that $\delta + \delta' \notin R$. We wish to show $H_q(u)=\sum_iu_i\log\frac{p_i}{q_i} \geq 0$. Consider several cases. First, if $\delta=\delta'=e_j-e_k$, it suffices to show that $$u_j\log\frac{p_j}{p_j+2}+u_k\log\frac{p_k}{p_k-2}\geq 0.$$ We claim that \begin{align}\label{main_ineq}
    u_j\log\frac{p_j}{p_j+2}+u_k\log\frac{p_k}{p_k-2}\geq 2u_j\log\frac{p_j}{p_j+1}+2u_k\log\frac{p_k}{p_k-1},
\end{align} which would be sufficient, since the right-hand side of the above equation is $\geq 0$ by assumption. We show that 
\begin{align}\label{ineqs}
    u_j\log\frac{p_j}{p_j+2}\geq 2u_j\log\frac{p_j}{p_j+1} \hspace{0.5cm}\text{and}\hspace{0.5cm} u_k\log\frac{p_k}{p_k-2}\geq 2u_k\log\frac{p_k}{p_k-1}.
\end{align}
Observe:
\begin{align*}
   &u_j\log\frac{p_j}{p_j+2}\geq 2u_j\log\frac{p_j}{p_j+1} \iff p_j^2+2p_j+1\geq p_j^2+2p_j,\\
   &u_k\log\frac{p_k}{p_k-2}\geq 2u_k\log\frac{p_k}{p_k-1}\iff p_k^2-2p_k+1\geq p_k^2-2p_k.
\end{align*}

Thus (\ref{ineqs}) holds, and we conclude that (\ref{main_ineq}) is true in this case, as desired. If $\delta\neq\delta'$, but they share both indices, then $p=q$, and we're done. If they do not share any indices, then we have that $H_q(u)=H_\delta(u)+{H_{\delta'}}(u)\geq 0$ by assumption. Suppose $\delta\neq\delta'$, and $\delta$ and $\delta'$ share one index, $j$. If $\delta=e_i-e_j$ and $\delta'=e_j-e_k$ for $i\neq j\neq k$, then $\delta+\delta'=e_i-e_k$, a contradiction to the assumption $\delta + \delta' \notin R$. Similarly when $\delta=e_j-e_i$ and $\delta'=e_k-e_j$. Suppose then that $\delta=e_i-e_j$ and $\delta'=e_i-e_k$. We wish to show that 
$$u_i\log \frac{p_i}{p_i+2}+u_j\log \frac{p_j}{p_j-1}+u_k\log\frac{p_k}{p_k-1}\geq 0.$$
Note then that
\begin{align*}
    u_i\log \frac{p_i}{p_i+2}\geq 2u_i\log\frac{p_i}{p_i+1}
    \iff & p_i^2+2p_i+1\geq p_i^2+2p_i,
\end{align*}
and the last inequality always holds for positive $p_i$, so the lemma is true for this case. The case when  $\delta=e_j-e_i$ and $\delta'=e_k-e_i$ is proved similarly. Since $H_q(u)\geq 0$ in all of the cases we considered, and the cases are exhaustive, we conclude that the lemma holds.
\end{proof}

\textbf{A family of polytopes.} For $n=2,3,4,5,6,7$ we write below the $f$-vectors for the logarithmic Voronoi cells of any point $p \in \mathcal{M}_{n,d}$ with $p_i > 1$ in all coordinates. These were computed numerically and using the face characterization of Theorem \ref{theorem:log-root-polytope-face-bijection}. The logarithmic Voronoi cells for every $\mathcal{M}_{n,d}$ are combinatorially isomorphic to the dual of the corresponding root polytope, exactly as in the Euclidean case.
\begin{equation*}
    \begin{array}{cl}
        n=2 & (1, 2, 1) \\
        n=3 & (1, 6, 6, 1) \\
        n=4 & (1, 14, 24, 12, 1)\\
        n=5 & (1, 30, 70, 60, 20, 1)\\
        n=6 & (1, 62, 180, 210, 120, 30, 1)\\
        n=7 & (1, 126, 434, 630, 490, 210, 42, 1)
    \end{array}
\end{equation*}
We have a family of Euclidean Voronoi polytopes that tile $\mathbb{R}^{n-1}$ and a family of logarithmic Voronoi polytopes that tile the open simplex $\Delta_{n-1}$. This family begins
\begin{equation*}
    \begin{array}{cccc}
        n-1=1 & n-1=2 & n-1=3 & \cdots \\
        \text{line segment} & \text{hexagon} & \text{rhombic dodecahedron} & \cdots
    \end{array}
\end{equation*}

Root polytopes of type $A$ have connections to tropical geometry. The rhombic dodecahedron is a polytrope which has been called the $3$-pyrope because of the mineral $\text{Mg}_3\text{Al}_2(\text{SiO}_4)_3$ whose pure crystal can take the same shape. For more on root polytopes, tropical geometry, and polytropes, see \cite{JoswigKulasTropicalOrdinaryConvexityCombined}.

Georgy Voronoi devoted many years of his life to studying properties of 3-dimensional parallelohedra, convex polyhedra that tessellate 3-dimensional Euclidean space. His paper on the subject called \textit{{R}echerches sur les parall\'{e}llo\`edres primitifs} \cite{VoronoiFrenchPaper} was a result of his twelve-year work. In a cover letter to the manuscript, he wrote: ``I noticed already
long ago that the task of dividing the $n$-dimensional analytical space into convex
congruent polyhedra is closely related to the arithmetic theory of positive quadratic
forms'' \cite{VoronoiBio}. Indeed, Voronoi was interested in studying cells of lattices in $\ZZ^n$ with the aim of applying them to the theory of quadratic forms.  This motivated us to study a lattice intersected with the probability simplex, the topic of our current section. Today, Voronoi decomposition finds applications to the analysis of spatially distributed data in many fields of science, including mathematics, physics, biology, archaeology, and even cinematography. In \cite{NicoZavallos2019OptimizingTheSacrifice}, the author uses Voronoi cells to optimize search paths in an attempt to improve the final 6-minute scene of Andrei Tarkovsky's \textit{Offret (the Sacrifice)}. Voronoi diagrams are so versatile they even found their way into baking: Ukrainian pastry chef Dinara Kasko uses Voronoi diagrams to 3D-print silicone molds which she then uses to make cakes \cite{DinaraKaskoCakes}.

\textbf{Acknowledgements:} Both authors would like to thank Bernd Sturmfels for suggesting this topic during the Summer of 2019, including many helpful suggestions along the way. We also thank the Max Planck Institute of Mathematics in the Sciences for support during the summers of 2019 and 2020, and also the library staff for their exceptional support during the difficult pandemic, allowing us access to the resources we need for research. The first author was also supported by the Berkeley Chancellor's fellowship.

\printbibliography

\end{document}